\newtheorem{thm}{Theorem}
\newtheorem{prop}{Proposition}
\theoremstyle{definition}
\newtheorem{defn}{Definition}
\newtheorem{lemma}{Lemma}
\renewcommand{\Re}{\mathbb R}
\newcommand{\Red}{\Re^d}
\newcommand{\F}{\mathcal{F}}
\newcommand{\CC}{\mathcal{C}}
\newcommand{\G}{\mathcal{G}}
\newcommand{\ck}[3]{c_{#1}^{#3}(#2)}
\newcommand{\ik}[3]{i_{#1}^{#3}(#2)}
\newcommand{\Pe}{\mathbb{P}}
\newcommand{\C}{\mathcal{C}}
\title{Separation with restricted families of sets}
\date{}
\author[
Zs. L\'angi \and M. Nasz\'odi \and J. Pach \and G. Tardos \and G. T\'oth
]{Zsolt L\'angi \and M\'arton Nasz\'odi \and J\'anos Pach \and G\'abor
Tardos \and G\'eza T\'oth}
\address[Zs. L\'angi]{Budapest University of Technology, 
Budapest}\email{zlangi@math.bme.hu}
\address[M. Nasz\'odi]{EPFL, Lausanne and E\"otv\"os University, 
Budapest}\email{marton.naszodi@math.elte.hu}
\address[J. Pach]{EPFL, Lausanne and R\'enyi Institute, 
Budapest}\email{pach@cims.nyu.edu}
\address[G. Tardos]{R\'enyi Institute, 
Budapest}\email{tardos.gabor@renyi.mta.hu}
\address[G. T\'oth]{R\'enyi Institute, Budapest and Budapest University of 
Technology}\email{toth.geza@renyi.mta.hu}
\thanks{
Zsolt L\'angi was supported by the Janos Bolyai Research Scholarship 
of the Hungarian Academy of Sciences.\\
M\'arton Nasz\'odi was supported by the Janos Bolyai Research Scholarship 
of the Hungarian Academy of Sciences and by the Hungarian National Science 
Foundation (OTKA) Grant PD-104744.\\
J\'anos Pach was supported by Swiss National Science Foundation 
Grants 200020-144531 and 200021-137574.\\
G\'abor Tardos was supported by the \emph{Cryptography} ``Lend\"ulet'' 
project of the Hungarian Academy of Sciences.\\
G\'eza T\'oth was supported by Hungarian National Science 
Foundation (OTKA) Grants K-111827 and K-83767.
}
\keywords{Search theory, separation, VC-dimension, Erd\H os--Szekeres theorem}
\subjclass[2010]{90B40, 52A37}
\begin{document}
\begin{abstract}
Given a finite $n$-element set $X$, a family of subsets ${\mathcal F}\subset
2^X$ is said to \emph{separate} $X$ if any two elements of $X$ are separated by
at least one member of $\mathcal F$. It is shown that if $|\mathcal
F|>2^{n-1}$, then one can select $\lceil\log n\rceil+1$ members of $\mathcal F$
that separate $X$. If $|\mathcal F|\ge \alpha 2^n$ for some $0<\alpha<1/2$,
then
$\log n+O(\log\frac1{\alpha}\log\log\frac1{\alpha})$ members of $\mathcal F$
are
always sufficient to separate all pairs of elements of $X$ that are separated
by some member of $\mathcal F$. This result is generalized to simultaneous
separation in several sets. Analogous questions on separation by families of
bounded Vapnik-Chervonenkis dimension and separation of point sets in
${\mathbb{R}}^d$ by convex sets are also considered.
\end{abstract}

\maketitle

\section{Introduction}

For a set $X$, we say that a subset of $X$ \emph{separates} two
elements if it contains one of them and does not contain the other. For a
family $\F$ of subsets of
$X$, we say that it \emph{separates} a pair of elements of $X$ if at least one
member
of $\F$ separates them. Furthermore, $\F$
\emph{separates} $X$ if every pair of distinct elements of $X$ is separated by
$\F$.

Suppose your computer is infected by a virus $x\in X$, where $X$ is the set of
known computer viruses. You want to perform a number of tests to find out
which virus it is. Each test detects a certain set of viruses, which can be
associated with the test. Let $\F$ denote the family of subsets of $X$
associated with the tests you can perform. These tests are sufficient to
identify the virus if, and only if $\F$ separates $X$. The number of tests
necessary is at least $\log|X|$, where $\log$ stands for the base 2 logarithm.
On the other hand, there is a family $\F\subset 2^X$ with
$|\F|\leq\lceil\log|X|\rceil$ that separates $X$. This is the starting point of
a rich discipline called \emph{combinatorial search theory}; see~\cite{AhlW87}.

Any fixed pair of distinct elements in $X$ is separated by $2^{|X|-1}$ subsets
of $X$, thus a family $\F$ with $|\F|>2^{|X|-1}$ separates $X$. Our first
theorem states that in this case, even a small subfamily of $\F$ does the job.

\begin{thm}\label{thm:corOfThmLogP1}
Let $X$ be a finite set, $\F \subseteq 2^{X}$ with $| \F | > 2^{|X|-1}$.
Then $X$ can be separated by a subfamily $\G\subseteq\F$ of cardinality at most
$\lceil \log |X| \rceil +1$.
\end{thm}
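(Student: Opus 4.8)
The plan is to build the desired separating subfamily as the ``columns'' of a binary labeling of $X$. Identifying subsets of $X$ with their indicator vectors in $\mathbb{F}_2^{X}$, a subfamily $\{A_1,\dots,A_k\}$ separates $X$ exactly when the map $x\mapsto(\mathbf 1_{A_1}(x),\dots,\mathbf 1_{A_k}(x))$ is an injection $X\hookrightarrow\{0,1\}^{k}$ — equivalently, the common refinement of the partitions $\{A_i,X\setminus A_i\}$ has only singleton blocks. So it suffices to produce such an injection with $k=\lceil\log n\rceil+1$ whose $k$ column sets all lie in $\mathcal{F}$.

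I would first squeeze two facts out of the hypothesis $|\mathcal{F}|>2^{n-1}=\tfrac12|2^{X}|$. Since $|\mathcal{F}|+|\mathcal{F}+v|>2^{n}$ for every $v$, the difference set $\mathcal{F}-\mathcal{F}$ is all of $\mathbb{F}_2^{X}$: every $Y\subseteq X$ can be written $Y=A\triangle B$ with $A,B\in\mathcal{F}$, and a one-line check shows $\{A,B\}$ then separates every pair that $Y$ separates. Thus a single ``ideal'' coordinate can always be simulated by two honest members of $\mathcal{F}$ — a natural source of the extra $+1$. Secondly, $\sum_{i\le\lceil n/2\rceil-1}\binom ni\le 2^{n-1}$, so by the Sauer–Shelah lemma $\mathcal{F}$ shatters a set $S$ with $|S|\ge\lceil n/2\rceil$; that is, $\mathcal{F}|_{S}=2^{S}$, so one may prescribe at will the trace on $S$ of some member of $\mathcal{F}$.

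The induction on $n$ (with $n\le 2$ immediate) would then run as follows. Put $T=X\setminus S$, $t=|T|\le\lfloor n/2\rfloor$. From $|\mathcal{F}|\le|\mathcal{F}|_{T}|\cdot 2^{|S|}$ we get $|\mathcal{F}|_{T}|>2^{t-1}$, so by the inductive hypothesis there are $\le\lceil\log t\rceil+1\le\lceil\log n\rceil$ members of $\mathcal{F}$ whose traces on $T$ already separate $T$; lift these to members of $\mathcal{F}$ with the prescribed traces. Then enlarge the family with members whose traces on $S$ are chosen freely (using $\mathcal{F}|_{S}=2^{S}$): first enough to separate $S$ internally by binary search, and then, if still needed, one last ``gluing'' set — obtained from the symmetric-difference trick — to pull every point of $S$ apart from every point of $T$. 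Forcing the total to come out to $\lceil\log n\rceil+1$ is the whole game, and is why exactly one extra set is both needed and enough.

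The hard part will be that accounting. Run naively, the two halves cost roughly $\lceil\log s\rceil+\lceil\log t\rceil\approx 2\log n$, far over budget, so the sets chosen for $T$ must be forced to do genuine work on $S$ as well (a plain ``remove one point and recurse'' induction fails for the same reason: the ceilings leave no room for the lift-and-repair step). I expect one has to strengthen the inductive statement — for instance, demanding a separating subfamily of size $\le\lceil\log n\rceil+1$ whose pattern map $X\to\{0,1\}^{k}$ omits a prescribed vector, so that when a labeling of $T$ is lifted into $X$ it can be slotted in without a collision — and to treat separately the boundary cases in which $n$, $s$, or $t$ is a power of two, where the ceilings provide no slack. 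I would expect the proof to live in these bookkeeping details rather than in any single surprising step.
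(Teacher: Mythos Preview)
Your proposal is not a proof: you yourself flag that the induction does not close, and the suggested repair (strengthen the hypothesis to forbid one prescribed pattern) is a hope, not an argument. The difficulty is structural, not bookkeeping. The Sauer--Shelah step gives you full freedom on $S$, but the $\lceil\log t\rceil+1$ separators for $T$ are lifts of elements of $\F|_T$ and carry \emph{no} control on $S$; conversely, sets chosen freely on $S$ carry no control on $T$. There is no mechanism forcing the $T$-sets to do ``genuine work on $S$'', and forbidding a single label does not help when you must insert $|S|\ge n/2$ new labels among the existing $|T|$ labels. I do not see how to rescue this decomposition without essentially re-proving the theorem inside the inductive step.

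What you are missing is that your very first observation already does the whole job once it is sharpened. You note that $\F-\F=2^X$, so any single ``ideal'' separator $Y$ can be written $Y=A\triangle B$ with $A,B\in\F$; applied separately to $r=\lceil\log n\rceil$ ideal separators this costs $2r$, which is where your $+1$ intuition stalls. The paper's argument (due to M\'esz\'aros) removes the factor~$2$ by arranging all these symmetric-difference representations to share a common first term. Let $U$ be any family of $r$ subsets separating $X$ and $W=\mathrm{span}\,U$ in $\mathbb F_2^{X}$. Since $|\F|>2^{|X|-1}$, some coset $d+W$ has more than half of its elements in $\F$; take $d$ itself in $\F$. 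Then $\{z\in W:d+z\in\F\}$ has size $>|W|/2$, hence spans $W$, hence contains a basis $Z$ of $W$ with $|Z|\le r$. The family $\G=\{d\}\cup\{d+z:z\in Z\}\subseteq\F$ has at most $r+1$ elements, and because each $z\in Z$ is the symmetric difference $d\,\triangle\,(d+z)$ of two members of $\G$, every element of $U$ lies in the subspace generated by $\G$; since the sets not separating a fixed pair form a linear subspace, $\G$ separates $X$. No induction, no Sauer--Shelah, no case analysis on powers of two.
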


This statement is almost tight, but not completely. Indeed, for $|X|=5$,
Theorem~\ref{thm:corOfThmLogP1} guarantees the existence of a $4$-member
separating family, but it is easy to verify that $3$ sets suffice. In the
following generalization we give the best possible bound.

\begin{thm}\label{thm:logP1}
Let $X_1, \ldots, X_k$ be pairwise disjoint sets with $|X_i| \leq n$ for
$i=1,2,\ldots,k$. Let $X = \bigcup_{i=1}^k X_i$.
If $\F \subseteq 2^{X}$ satisfies $| \F | > 2^{|X|-1}$, then $\F$ has
a subfamily of cardinality at most $\lceil \log n \rceil +1$ that separates
$X_i$ for every $i\; (i=1,2,\ldots,k)$.

The above bound is tight, that is, the same statement is false for every
$n\ge 2$, if we replace $\lceil\log n\rceil + 1$ by $\lceil\log n\rceil$.
\end{thm}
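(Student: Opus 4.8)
I sketch how I would approach both assertions.

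\emph{The upper bound.} Put $m=\lceil\log n\rceil$, so $n\le 2^{m}$, and regard subsets of $X$ as vectors of $\mathbb F_{2}^{X}$. The hypothesis $|\F|>2^{|X|-1}$ says that $\F$ lies in no affine hyperplane; moreover, for any $v$, $\F$ and $\{v\triangle F:F\in\F\}$ each occupy more than half of $\mathbb F_{2}^{X}$, hence meet, so $\{F\triangle F':F,F'\in\F\}=\mathbb F_{2}^{X}$: every subset of $X$ is a symmetric difference of two members of $\F$. Now a subfamily $\{F_{1},\dots,F_{m+1}\}\subseteq\F$, viewed as the $(m+1)\times|X|$ zero--one matrix with these rows, separates $X_{i}$ exactly when the columns indexed by $X_{i}$ are pairwise distinct. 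My plan is: first choose sets $S_{1},\dots,S_{m}\subseteq X$ whose induced partition already separates every $X_{i}$ --- possible since each $|X_{i}|\le n\le 2^{m}$, by assigning the points of each block distinct patterns in $\{0,1\}^{m}$ --- and then realise these as prefix differences, i.e.\ find $F\in\F$ with $F,\ F\triangle S_{1},\ F\triangle S_{1}\triangle S_{2},\ \dots$ all lying in $\F$ and set $F_{j}=F\triangle S_{1}\triangle\cdots\triangle S_{j-1}$. Since $F_{j}\triangle F_{j+1}=S_{j}$, we get $\mathrm{span}_{\mathbb F_{2}}\{F_{1},\dots,F_{m+1}\}=\mathrm{span}_{\mathbb F_{2}}\{F,S_{1},\dots,S_{m}\}$, so the partition of each $X_{i}$ induced by the $F_{j}$ refines the one induced by $\{S_{1},\dots,S_{m}\}$ and is therefore discrete. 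The single-block case $k=1$ is Theorem~\ref{thm:corOfThmLogP1}, which can also serve as the base of an induction on $k$.

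\emph{The lower bound.} Fix $n\ge 2$ and again set $m=\lceil\log n\rceil$, so $2^{m-1}<n\le 2^{m}$. Let $I=\{j\in\Z:\ n-2^{m-1}\le j\le 2^{m-1}\}$; this is a nonempty set of integers with $0\notin I$ (as $n-2^{m-1}\ge1$) and $n\notin I$ (as $n>2^{m-1}$). Take $k$ pairwise disjoint $n$-element sets $X_{1},\dots,X_{k}$, set $X=\bigcup_{i}X_{i}$, and put
\[
  \mathcal B=\{G\subseteq X:\ |G\cap X_{i}|\in I\ \text{for every }i\},\qquad \F=2^{X}\setminus\mathcal B .
\]
No $m$-element subfamily of $\F$ separates all the $X_{i}$: if $\{G_{1},\dots,G_{m}\}$ separates $X_{i}$ then the $n$ points of $X_{i}$ receive distinct membership patterns in $\{0,1\}^{m}$, so for each $j$ the points of $X_{i}$ lying in $G_{j}$ use distinct patterns among the $2^{m-1}$ patterns whose $j$-th coordinate is $1$, giving $|G_{j}\cap X_{i}|\le 2^{m-1}$; symmetrically $|X_{i}\setminus G_{j}|\le 2^{m-1}$, i.e.\ $|G_{j}\cap X_{i}|\ge n-2^{m-1}$. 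Thus $|G_{j}\cap X_{i}|\in I$, and if this holds for every $i$ then each $G_{j}\in\mathcal B$, contradicting $G_{j}\in\F$. (A family of fewer than $m$ sets induces at most $2^{m-1}<n$ classes on $X_{i}$, so it separates no $X_{i}$ at all.) Finally $|\F|>2^{|X|-1}$ once $k$ is large: $|\mathcal B|=\bigl(\sum_{j\in I}\binom nj\bigr)^{k}=:M^{k}$, and since the extreme terms $\binom n0,\binom nn$ are missing, $M\le 2^{n}-2<2^{n}$, whence $|\F|/2^{|X|}=1-(M/2^{n})^{k}\to 1$. So for $k>1/\log_{2}(2^{n}/M)$ this configuration witnesses that $\lceil\log n\rceil+1$ cannot be lowered to $\lceil\log n\rceil$, for every $n\ge 2$.

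\emph{Where the difficulty lies.} The hard step is in the upper bound: guaranteeing that the $m+1$ translates $\F,\ \F\triangle S_{1},\ \F\triangle S_{1}\triangle S_{2},\dots$ have a common element. A crude inclusion--exclusion bound only yields this when $|\F|>\frac{m}{m+1}2^{|X|}$, which for $m\ge2$ is far stronger than our hypothesis. To close the gap one has to exploit the large freedom in choosing the $S_{j}$ (there are many $m$-element separating families, and $F$ itself can be made to separate the last unseparated pair), and to treat separately the extremal case in which $\F$ is concentrated near a single hyperplane $v^{\perp}$: there one takes the $S_{j}$ inside $v^{\perp}$ for all within-block pairs except the one pair corresponding to $v$, and spends the single remaining set on that pair. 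Making this book-keeping balance at $m+1$ rather than $m+2$ is the crux.
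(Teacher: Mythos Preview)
Your lower bound is correct and uses a different construction from the paper's. The paper simply takes $\F$ to consist of all $G\subseteq X$ that meet at least one block $X_i$ in $\emptyset$ or in all of $X_i$; then any single $G\in\F$ is useless on some block, so $\lceil\log n\rceil$ further sets are needed for that block, forcing $|\G|\ge\lceil\log n\rceil+1$. With $k=2^{n-1}$ one has $|2^X\setminus\F|/2^{|X|}=(1-2^{1-n})^{2^{n-1}}<1/e<1/2$. Your cardinality-interval construction is more elaborate but equally valid, and has the minor advantage that any sufficiently large $k$ works.

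Your upper bound, however, has a real gap --- the one you flag yourself in the final paragraph. Fixing $S_1,\dots,S_m$ in advance and then seeking a common point of the $m+1$ translates $\F,\ \F\triangle T_1,\ \dots,\ \F\triangle T_m$ (with $T_j=S_1\triangle\cdots\triangle S_j$) genuinely fails at density just above $1/2$; your inclusion--exclusion remark is the actual obstruction, and the suggestions about ``freedom in the $S_j$'' or a ``hyperplane extremal case'' are not a proof. The missing idea is short. Let $W=\mathrm{span}_{\mathbb F_2}\{S_1,\dots,S_m\}\subseteq 2^X$, a subspace of dimension at most $m$. By averaging over cosets, some coset $d+W$ satisfies $|\F\cap(d+W)|>|W|/2$; choose $d\in\F$ in that coset. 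Then $T=\{w\in W:\ d+w\in\F\}$ has $|T|>|W|/2$, and any subset of a finite $\mathbb F_2$-space occupying more than half of it cannot lie in a proper subspace, so $T$ spans $W$ and hence contains a basis $z_1,\dots,z_r$ of $W$ with $r\le m$. The family $\{d,\,d+z_1,\,\dots,\,d+z_r\}\subseteq\F$ has at most $m+1$ members, and since the differences $z_j=d\triangle(d+z_j)$ generate $W\ni S_1,\dots,S_m$, it separates every $X_i$. The essential shift is from demanding a prescribed chain of translates to pigeonholing into one coset of $W$ and using that a majority subset of a vector space automatically contains a basis.
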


We call $|\F|/2^{|X|}$ the \emph{density} of $\F$. If this slips below $1/2$,
we cannot guarantee the existence of a small subfamily separating $X$, as $\F$
itself
does not necessarily separate $X$. But, as claimed by the next theorem, we can
still find a small subfamily separating all pairs in $X$ that $\F$ separates.
We state this result for simultaneous separation.

\begin{thm}\label{thm:logPalpha}
Let $X_1, \ldots, X_k$ be disjoint sets with $|X_i| \leq n$ for all
$i=1,2,\ldots,k$. Let $X = \bigcup_{i=1}^k X_i$ and $\F \subseteq 2^{X}$. Then
$\F$ has a subfamily of size at most $\lceil \log n \rceil +C \log
\frac{1}{\alpha} \log \log \frac{1}{\alpha}$ separating every pair in each
$X_i$ that is separated by $\F$.
Here $\alpha=|\F|/2^{|X|}$ is the density of $\F$ and $C$ is a universal
constant.
\end{thm}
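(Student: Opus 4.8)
The plan is to reduce to the dense regime covered by Theorem~\ref{thm:logP1}. Begin with a harmless normalisation: putting $x\equiv y$ when no member of $\F$ separates $x$ and $y$ is an equivalence relation, and since every $A\in\F$ is a union of $\equiv$-classes, $\F$ descends \emph{injectively} to a family on the quotient $X/{\equiv}$ that separates the whole quotient, has density at least $\alpha$, and whose parts (the images of the $X_i$) still have size at most $n$; separating those images is exactly the required conclusion. So assume $\F$ separates $X$.

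The main combinatorial tool is a \emph{balancing lemma}: if $\F\subseteq 2^{X}$ has density at least $\alpha$ and $P\subseteq X$ has $|P|\ge c_0\log\tfrac1\alpha$ for a suitable absolute constant $c_0$, then $\tfrac13|P|\le|A\cap P|\le\tfrac23|P|$ for some $A\in\F$; more generally, given pairwise disjoint $P_1,\dots,P_r$ with $|P_j|\ge c_0\log\tfrac{2r}{\alpha}$ for all $j$, one $A\in\F$ satisfies this for every $P_j$ simultaneously. The proof is a union bound: the number of $B\subseteq P_j$ violating the balance is at most $2^{1+H(1/3)|P_j|}$ (with $H$ the binary entropy), so the number of $A\in\F$ violating it on $P_j$ is at most $2^{|X|+1-(1-H(1/3))|P_j|}$, and since $1-H(1/3)>0$ the stated lower bounds on the $|P_j|$ make the union of the bad sets strictly smaller than $|\F|$.

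The heart of the argument is then to select a subfamily $A_1,\dots,A_t\in\F$ with $t=O\big(\log\tfrac1\alpha\log\log\tfrac1\alpha\big)$ after which the residual separation problem satisfies the density hypothesis of Theorem~\ref{thm:logP1} — so that $\lceil\log n\rceil+1$ further members of $\F$ finish the job, for a total of $\lceil\log n\rceil+O\big(\log\tfrac1\alpha\log\log\tfrac1\alpha\big)$. The way to produce these $t$ sets is by iterated balancing: the $A_j$'s chosen so far cut $X$ into atoms, a still-unseparated pair of some $X_i$ lies in a single atom, and, while some atom meets some $X_i$ in more than a threshold number $s$ of points, the balancing lemma (applied to the over-threshold atom-part pieces, of which there are at most the current number of atoms) yields one more member of $\F$ that roughly thirds every such piece; a logarithmic number of such steps drops the effective part size, and iterating this reduction $O(\log\log\tfrac1\alpha)$ times brings the problem down to one in which each atom meets each $X_i$ in only $O(\log\tfrac1\alpha)$ points and on which the relevant restriction of $\F$ is relatively dense — at which stage Theorem~\ref{thm:logP1} applies.

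The main obstacle is making this accounting actually close. The balancing lemma can only be invoked while its threshold exceeds $\log_2$ of the number of atoms created so far, and that number is bounded only by $2^{(\text{sets used})}$ — i.e.\ by the very quantity we are bounding — so the selection procedure has to be organised with just the right number of rounds (about $\log\log\tfrac1\alpha$) and overhead per round (about $\log\tfrac1\alpha$) for the recursion on the effective part size to terminate within budget; this balance is exactly what produces the $\log\tfrac1\alpha\log\log\tfrac1\alpha$ term. A secondary difficulty is that the number of atom-part pieces handled at once is a priori unbounded in $n$ and $\alpha$ (it may grow with $k$); one sidesteps this by only ever balancing pieces above the threshold and by invoking Theorem~\ref{thm:logP1}, whose bound is itself independent of $k$, as soon as the density permits.
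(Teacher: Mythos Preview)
Your plan has a real gap at the decisive step. You propose to spend $O\bigl(\log\tfrac1\alpha\log\log\tfrac1\alpha\bigr)$ sets on balancing and then invoke Theorem~\ref{thm:logP1} as a black box for the final $\lceil\log n\rceil+1$ sets. But Theorem~\ref{thm:logP1} needs density above $1/2$, and nothing you do raises the density of $\F$: it stays $\alpha$ throughout. Your phrases ``on which the relevant restriction of $\F$ is relatively dense'' and ``as soon as the density permits'' are never cashed out---restricting $\F$ to a subset of $X$ keeps the density at least $\alpha$, but there is no mechanism that forces it above $1/2$. The paper runs the argument in the \emph{opposite} order: it opens with a variant of the linear-algebra argument behind Theorem~\ref{thm:logP1} (not the theorem as a black box) that works at \emph{any} density $\alpha$ and, for a loss of less than~$2$, reduces the maximum part size from $n$ to at most $1/\alpha$; this is where the $\log n$ term is spent. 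Only afterwards does it burn $O\bigl(\log\tfrac1\alpha\log\log\tfrac1\alpha\bigr)$ further sets on parts that are already of size $\le 1/\alpha$.

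The ``secondary difficulty'' you flag is also not dispatched. Your simultaneous balancing lemma has threshold $c_0\log\tfrac{2r}{\alpha}$, and the number $r$ of over-threshold pieces can be as large as $|X|/s\le kn/s$, so the procedure stalls once pieces reach size $\Theta(\log(kn/\alpha))$---a bound that depends on $k$ and $n$, not just $\alpha$. The paper sidesteps the $r$-dependence altogether with a coset trick: it chooses $f:X\to V$ with $|V|$ equal to the current maximum part size, injective on every part, and passes to a coset of the subgroup $\{f^{-1}(H):H\subseteq V\}$ on which $\F$ still has density $\ge\alpha$; then a \emph{single} balancing in $V$ advances every part at once. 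The step that actually produces the $\log\tfrac1\alpha\log\log\tfrac1\alpha$ loss uses a different tool you do not mention, the Brace--Daykin covering theorem, to hit all ``good'' parts simultaneously with $O(\log\tfrac1\alpha)$ sets per round for $O(\log\log\tfrac1\alpha)$ rounds; a separate counting argument then confines all remaining non-singleton parts to a set of size $O(\log\tfrac1\alpha)$, which is cleaned up greedily.
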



\begin{defn}\label{defn:constraint}
Let $X$ be a set of $n$ elements. We call a pair $(V,W)$ of disjoint subsets of
$X$ a \emph{constraint}, $V\cup W$ is the \emph{support} and $|V\cup W|$ is the
\emph{size} of the constraint.
A subset $A$ of $X$ \emph{satisfies} the constraint $(V,W)$ if $V\subseteq A$
and $W\cap A=\emptyset$. A family $\F$ of subsets of $X$ \emph{satisfies} a
constraint if the constraint is satisfied by some member of $\F$.
\end{defn}

Note that the fact that two elements $x,y\in X$ are separated by a member of
$\F$ means that at least one of the constraints $(\{x\},\{y\})$ and
$(\{y\},\{x\})$ is satisfied by $\F$.

Given a family $\F\subseteq2^X$ and a family of constraints satisfied by $\F$,
we are looking for a small subfamily of $\F$ that also satisfies the given
constraints. The next theorem establishes what the density of $\F$ has to be
(depending on the size of the constraints) for this to be possible.

\begin{thm}\label{thm:satcond}
For a positive integer $m$ and $1-\frac{1}{2^{m-1}}<\alpha<1$ there exists a
constant
$0<c=c(m,\alpha)$ with the following property.
If $X$ is a finite set, $\F$ is a family of subsets of $X$ with density
above $\alpha$ and $\CC$ is a collection of $N$ constraints, each of size
$m$ and satisfied by $\F$, then there is a subfamily consisting of at most
$c\log N$ sets from $\F$ that satisfies all the constraints in $\CC$.

A similar statement is false for any $m\ge1$ and $\alpha=1-\frac1{2^{m-1}}$.
\end{thm}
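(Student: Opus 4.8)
The plan is to reduce the positive statement to a greedy selection procedure whose engine is a \emph{heavy set lemma}: there is a constant $\delta=\delta(m,\alpha)>0$ and a threshold $N_0=N_0(m,\alpha)$ such that whenever $\F$ has density above $\alpha$ and $\CC$ is a collection of more than $N_0$ size-$m$ constraints, \emph{some} member of $\F$ satisfies at least a $\delta$-fraction of $\CC$. Granting this, I would run the obvious greedy algorithm: repeatedly add to the subfamily a set of $\F$ that satisfies at least a $\delta$-fraction of the not-yet-satisfied constraints; since the current collection of unsatisfied constraints is always a subfamily of $\CC$ (hence still consists of size-$m$ constraints, each satisfied by $\F$), after $O(\delta^{-1}\log N)$ rounds the number of unsatisfied constraints drops below $N_0$, and I finish by adding one witnessing member of $\F$ for each of the at most $N_0$ survivors. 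This produces a subfamily of $\F$ of size $O(\delta^{-1}\log N+N_0)$, which is $c(m,\alpha)\log N$ for a suitable $c$ (treating bounded $N$ trivially).

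For the heavy set lemma, fix a probability distribution $\mu$ on $\CC$ (in the application $\mu$ is uniform on the current unsatisfied constraints, so $\sum_j\mu(j)^2=1/N'$ with $N'=|\CC|$) and put $g(A)=\mu(\{j\in\CC:A\text{ satisfies }j\})$ for $A\subseteq X$. Since a size-$m$ constraint is satisfied by exactly $2^{|X|-m}$ subsets of $X$, a uniformly random $A$ has $\mathbb{E}[g(A)]=2^{-m}$. The crucial input is a second-moment estimate: two \emph{distinct} size-$m$ constraints are satisfied simultaneously by at most $2^{|X|-m-1}$ subsets (their combined support has size at least $m+1$, or else they are incompatible and share no solution), whence $\sum_{A}g(A)^2\le 2^{|X|-m-1}\bigl(1+\sum_j\mu(j)^2\bigr)$. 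As $\F$ has density above $\alpha$, its complement has fewer than $(1-\alpha)2^{|X|}$ sets, so Cauchy--Schwarz gives $\mathbb{E}\bigl[g(A)\,\mathbf{1}[A\notin\F]\bigr]\le\sqrt{(1-\alpha)\,2^{-m-1}\bigl(1+\sum_j\mu(j)^2\bigr)}$. When $1-\alpha<2^{-(m-1)}$ and $N'$ is large enough that $\sum_j\mu(j)^2$ is sufficiently small, this last quantity is strictly below $2^{-m}$, so $\mathbb{E}\bigl[g(A)\,\mathbf{1}[A\in\F]\bigr]=2^{-m}-\mathbb{E}\bigl[g(A)\,\mathbf{1}[A\notin\F]\bigr]$ is bounded below by a positive constant $\delta(m,\alpha)$; in particular some $A\in\F$ has $g(A)\ge\delta$. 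The required threshold $N_0$ is whatever makes $(1-\alpha)\bigl(1+1/N_0\bigr)$ comfortably below $2^{-(m-1)}$, so it depends only on how much room $1-\alpha$ leaves under $2^{-(m-1)}$.

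The main obstacle, and the reason the cutoff is exactly $1-2^{-(m-1)}$ rather than the $1-2^{-m}$ that a naive first-moment (Markov) argument would yield, is precisely this factor-of-two gain: distinct size-$m$ constraints cannot share more than half of the solutions of a single one, which halves the variance and pushes the threshold down. It is equally essential to peel off the case $|\CC|\le N_0$ separately — there the averaging is too weak, but the hypothesis that \emph{every} constraint is satisfied by $\F$ lets one finish by brute force.

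For the sharpness claim I would, for each integer $t$, take $X=Z\cup Y$ with $|Z|=m-1$ and $|Y|=t$, and set $\F=\{A\subseteq X:Z\not\subseteq A\}\cup\{A:Z\subseteq A,\ |Y\setminus A|\le 1\}$. A direct count gives density $1-2^{-(m-1)}+(t+1)2^{-(m-1+t)}$, which is above $1-2^{-(m-1)}$ and tends to it as $t\to\infty$. For each $y\in Y$ the constraint $(Z,\{y\})$ has size $m$ and is satisfied by $\F$ — but its only witness in $\F$ is the single set $Z\cup(Y\setminus\{y\})$. Hence any subfamily of $\F$ satisfying all $N=t$ of these constraints must contain all $t$ of the distinct sets $Z\cup(Y\setminus\{y\})$, so it has at least $t$ members, which exceeds $c\log N$ for every fixed $c$ once $t$ is large. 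Since $t$ may be taken arbitrarily large while the density stays above $1-2^{-(m-1)}$, no constant works at $\alpha=1-2^{-(m-1)}$, proving the last assertion of the theorem (the case $m=1$ being the degenerate instance $Z=\emptyset$).
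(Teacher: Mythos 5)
Your argument is correct, and for the main (positive) part it takes a genuinely different route from the paper. The paper splits the constraints into $\varepsilon$-good ones (satisfied by at least an $\varepsilon$-fraction of $\F$), which it covers by $(\log N)/\varepsilon$ uniformly random members of $\F$, and $\varepsilon$-bad ones, whose number it bounds by a constant $Z(m,\alpha)$ via the Erd\H{o}s--Rado sunflower lemma: a large sunflower of bad supports would force, by an inclusion computation over a random $A\subseteq X$, the inequality $2^{-b}\bigl(1-(1-2^{b-m})^a\bigr)<(1-\alpha)+a\alpha\varepsilon$, which fails for suitable $a,\varepsilon$ precisely when $1-\alpha<2^{1-m}$. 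Your second-moment ``heavy set lemma'' plus greedy peeling reaches the same threshold from the other side: the diagonal-versus-off-diagonal split in $\sum_A g(A)^2$ (using that distinct size-$m$ constraints share at most $2^{|X|-m-1}$ solutions) is what buys the factor of two over the naive Markov bound, exactly mirroring the role of $2^{-b}\ge 2^{1-m}$ in the paper's sunflower count. Your version is more elementary (no sunflower lemma), yields an explicit greedy procedure, and gives a much better constant than the paper's $Z=m!\,2^m(a2^m)^m$; the paper's version selects most of the subfamily in one random shot. Both arguments correctly isolate a bounded residue of constraints ($N_0(m,\alpha)$ for you, $Z(m,\alpha)$ for the paper) that must be handled by one witness each, using the hypothesis that every constraint is satisfied by $\F$. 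Your counterexample for the sharpness claim is essentially the paper's construction in mild disguise (the paper uses one-sided constraints $(V_i,\emptyset)$ with $V_i\supseteq Y$, $|Y|=m-1$, and lets $\F$ be all sets not containing $Y$ together with all $m$-element sets, so each constraint again has a unique witness); both are valid.
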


Up to this point, the ground set $X$ and the family $\F$ were not assumed to
possess any structure. The first such assumption we make is that of a bounded
VC-dimension and prove a linear lower bound on the size of any separating
subfamily in this case.
Given a family $\F$ of sets, the \emph{Vapnik-Chervonenkis dimension} (in
short, VC-dimension) of $\F$ is the largest integer $d$ for which there exists
a $d$-element set $A$ such that for every subset $B\subseteq A$, some member
$F$ of $\F$ has $F\cap A=B$.

Let us fix $d$
and assume a set of size $n$ is separated by a family $\F$ of VC-dimension
$d$. In this case $|\F|$ must be at least polynomial in $n$, namely $|\F|\ge
n^{1/(2^{d+1}-1)}$. This follows from the Shatter Function Lemma (or 
Sauer-Shelah
Lemma, cf. Lemma 10.2.5 in \cite{Mat02}) and the fact that the dual of
VC-dimension $d$ family has VC-dimension below $2^{d+1}$. The size of a
separator family of fixed VC-dimension $d$ can, indeed, be a small polynomial
of $n$, namely $|\F|<2^dn^{1/(2^d-1)}$ can be obtained by considering the dual
of the set system $\binom{Y}{2^d-1}$.

We can show even stronger, linear lower bound on the size of a separating
family if the base set to be separated can be arbitrarily chosen from an
infinite universe with a bounded VC-dimension set system.

\begin{thm}\label{thm:VC}
Let $U$ be an infinite set and ${\mathcal{F}}\subseteq2^U$ a family of subsets
of $U$ of Vapnik-Chervonenkis dimension $d$. For every $n>0$, there is a set
$X\subset U$ with $|X|=n$ such that any subfamily of ${\mathcal{F}}$ which
separates $X$, has at least $\frac{n-1}{d}$ members.
\end{thm}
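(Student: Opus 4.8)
The plan is to build the set $X$ greedily, adding points one at a time, while maintaining a lower bound on the number of members of $\F$ needed to separate the current set. The key quantity to track is the following: for a finite $Y\subset U$, consider the partition of $Y$ induced by a subfamily $\G\subseteq\F$, where two points of $Y$ lie in the same block iff every member of $\G$ contains both or neither. The family $\G$ separates $Y$ exactly when this partition has only singleton blocks, i.e.\ $|Y|$ blocks. Each single set $F\in\F$, when added to $\G$, can at most double the number of blocks. So if $\G$ separates $Y$, then $|\G|\ge \log|Y|$ — which is only the trivial bound. To get a linear bound we must instead exploit the VC-dimension: the plan is to choose the points of $X$ so that they lie inside a single block for as long as possible, forcing each added set of $\F$ to ``pay'' for only $d$ of the new points rather than doubling.

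Here is the mechanism. Suppose we have already chosen $X_t=\{x_1,\dots,x_t\}\subset U$ and we know that any separating subfamily has at least some size. We want to add a new point $x_{t+1}$. For a subfamily $\G\subseteq\F$, say a block $B$ of the partition induced by $\G$ on $X_t$ is \emph{large} if $|B|\ge 2$. The point is: if $x_{t+1}$ is chosen to lie in (the closure of) a large block for \emph{every} $\G$ of small size, then adding $x_{t+1}$ does not immediately help. More precisely, I would argue by contradiction: suppose $\G\subseteq\F$ separates $X=X_n$ and $|\G|=r<\tfrac{n-1}{d}$. Restricting attention to $\G$, consider the trace of $\G$ on $X$; since $\G$ separates $X$, distinct points of $X$ get distinct traces, so $X$ is ``shattered-like'' with respect to $\G$. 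Now invoke that $\G$ has VC-dimension at most $d$ (it is a subfamily of $\F$), together with a dual/Sauer–Shelah counting to bound how many points can receive distinct traces when the sets are chosen adversarially. Concretely: if $X$ is built so that at each step the new point agrees with an old point on all of $\F$ \emph{except} possibly on a controlled number of sets, then each set of $\F$ can ``split off'' at most $d$ points from being duplicates of earlier points — because a single set $F$ cutting through a block $B$ together with the VC-dimension constraint limits how the points of $B$ can be re-separated later.

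Thus the construction of $X$ is the heart of the matter. I would do it as follows: maintain, along with $X_t$, for each subfamily $\G\subseteq\F$ with $|\G|\le r$ a designated ``active'' block; when passing to $X_{t+1}$, we need a point $u\in U\setminus X_t$ that lands in a large block of $\G$ for all such $\G$ simultaneously, \emph{or at least} we need that the number of $\G$ whose blocks all become singletons grows slowly. Since $U$ is infinite and each $\G$ of size $\le r$ partitions $U$ into at most $2^r$ classes, by pigeonhole some class is infinite; intersecting over the finitely many relevant $\G$ (there are finitely many subfamilies of $\F$ of size $\le r$ only if $\F$ is finite — so one must be careful and instead argue per fixed target $\G$, or better, argue that the \emph{bad} event of $\G$ already separating $X_{t+1}$ can happen only after $\G$ has used up a fresh set for each group of $d$ points). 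The cleanest route: fix the final size $n$; it suffices to show that for every $r$ with $rd<n-1$, and every $\G\subseteq\F$ with $|\G|\le r$, the construction can be continued so that $\G$ does not separate $X_n$. Since whether $\G$ separates a finite set depends only on finitely much data, and since a VC-dimension-$d$ family restricted to being ``traced'' by $\le r$ sets can distinguish only $O(r^d)$ points that were built to cluster (this is where the bound $n-1\le rd$ must come from), a careful induction on $t$ delivers the result.

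\textbf{Main obstacle.} The delicate point I expect to fight with is handling \emph{all} small subfamilies $\G$ at once during the construction, since when $U$ and $\F$ are infinite there may be infinitely many candidate $\G$'s of size $\le r$. The resolution is to reverse the quantifiers: do not build $X$ to defeat every $\G$ preemptively; instead show that if some $\G$ with $|\G|<\tfrac{n-1}{d}$ separates the final $X$, then tracing the history of how $\G$'s partition on $X_1,X_2,\dots$ refined over time yields that $\G$ must contain more than $(n-1)/d$ sets — because each set of $\G$, inserted at the moment it first cuts a nontrivial block, can be charged at most $d$ of the points (by the VC-dimension bound applied locally to that block), and the points not charged to any set of $\G$ remain unseparated. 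Making this charging argument precise — in particular verifying that ``local VC-dimension $\le d$'' forces exactly the factor $d$ and not $2^d$ — is the crux, and it is precisely there that the freedom to choose each $x_{t+1}$ from the infinite universe $U$ (rather than from a fixed finite ground set) must be used.
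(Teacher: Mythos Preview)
Your proposal never actually constructs $X$, and the charging argument you gesture at is not substantiated. You write that ``each set of $\G$, inserted at the moment it first cuts a nontrivial block, can be charged at most $d$ of the points (by the VC-dimension bound applied locally to that block),'' but a single set $F$ cutting a block $B$ is in no way constrained by the VC-dimension: it simply splits $B$ into two pieces of arbitrary sizes. The VC bound only bites when $d+1$ \emph{specific} points are examined simultaneously and some trace is forbidden on them. Nothing in your greedy outline (whose selection rule for $x_{t+1}$ is never stated) arranges for such forbidden traces to line up across all of $X$, so there is no mechanism producing the factor $d$ rather than the trivial doubling bound you yourself note. Your ``main obstacle'' paragraph correctly identifies that you cannot defeat all small $\G$ simultaneously during construction, but the reversal-of-quantifiers fix you propose lands back on the unjustified charging step.

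The paper's proof supplies exactly the missing construction, and it is not greedy at all. One colors each ordered $(d{+}1)$-subset $A=\{a_1<\cdots<a_{d+1}\}$ of $U$ by a subset $T(A)\subseteq\{1,\ldots,d{+}1\}$ that is \emph{not} realized as $F\cap A$ for any $F\in\F$ (such a $T(A)$ exists precisely because the VC-dimension is $d$). By Ramsey's theorem there is an $n$-point set $X=\{y_1<\cdots<y_n\}$ on which all $(d{+}1)$-subsets have the same type $T$. This uniformity is the engine: for any $F\in\F$, a short greedy search inside $X$ for a $(d{+}1)$-tuple with trace $T$ must fail, and analyzing where it fails shows that $F$ separates at most $d-k$ of the consecutive pairs $(y_i,y_{i+1})$ outside a set of at most $k$ exceptional points (where $k$ counts indices $i$ with $i,i{+}1$ on the same side of $T$). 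Summing over a separating family of size $m$ gives $n-mk-1\le m(d-k)$, i.e.\ $m\ge(n-1)/d$. The Ramsey step is what aligns the forbidden pattern across all of $X$ and converts the global VC hypothesis into the per-set bound you were hoping to charge; without it, or a substitute doing the same job, your plan does not go through.
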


Note that this theorem is almost tight. Let $U$ be the set of positive integers
and let $\F$ consist of the subsets of $U$ containing $1$ and separating at
most $d$ pairs $(i,i+1)$. The VC-dimension of $\F$ is $d$ and any $n$ element
subset of $U$ can be separated by $(n-1)/d+\log d$ members of $\F$.

Now, we turn to separation problems where $X$ and $\F$ have some geometric
structure.

\begin{defn}\label{def:genpos}
A set of points in $\Red$ is said to be \emph{in general position} if no $d+1$
points lie on a $(d-1)$-dimensional affine subspace.
\end{defn}

A prototype of separation questions in the geometric setting was first studied
by Gerbner and T\'oth \cite{GT13}. They showed that for any set $X$ of $n$
points in general position in the plane, there is a family $\F$ of at most
$20n\log\log n/\log n$ convex sets that separates $X$. On the other hand, there
is a set $X$ of $n$ points in the plane in general position for which any
separator of $X$, consisting of convex sets, has cardinality at least $n/(2\log
n+2)$.

It is natural to ask how these results about separating pairs of points could
be extended to separation of pairs of $k$-tuples of points.

\begin{defn}
Let $X$ be a set and $k$ a positive integer with $k\leq |X|-1$.
We say that a set $F\subset X$ \emph{containment-separates} a pair of
$k$-element subsets of $X$ if $F$ contains one of them and does not contain the
other. A family $\F$ of subsets of $X$ is a \emph{containment-separator} of
$k$-subsets of $X$ if for any two $k$-subsets $A$ and $B$ of $X$ there is at
least one member of $\F$ that separates them.
\end{defn}

Our goal is to select a small subfamily of $\F$ that containment-separates
the $k$-subsets of $X$.
As a generalization of the question discussed in \cite{GT13}, we denote by
$\ck{k}{n}{d}$ the minimum
number $c$ such that, for any set $X$ of $n$ points in general position in
$d$-space, there is a containment-separator of
$k$-subsets of $X$ which consists of $c$ convex sets.
This makes sense for
$k\le d+1$ and we have $\ck knd\le{\binom{n}{k}}$ because the convex hulls of
the $k$-subsets of $X$ containment-separate the $k$-subsets. However,
two $k$-subsets with the same convex hull are not containment-separated by any
convex set. Thus,  $\ck{k}{n}{d}$ does not exist if $d+2\leq k<n$.

The result quoted above from \cite{GT13} can be stated as
\begin{equation*}\label{eq:GT}
  n/(2\log n+2)\leq \ck{1}{n}{2}\leq 20n\log\log n/\log n.
\end{equation*}

Regarding higher dimensional point sets, it is observed in \cite{GT13} that for
any $n$ and $d$,
\begin{equation*}
c \frac{n}{\log^{d-1} n} \leq \ck{1}{n}{d} \leq C
\frac{n  \log \log n}{\log n}.
\end{equation*}

Our goal is to find bounds on $\ck{k}{n}{d}$ for $k\geq 2$.

\begin{thm}
\label{thm:ck}
For any $d=2,3,\ldots$, and any $k=2,\ldots,d+1$, there exists $c(k,d)>0$ such
that the following holds for any $n>k+1$.
\begin{eqnarray}
&c_2^1(n)&=2n-4
\label{line}\\
 \left\lfloor\frac{n-2}{2}\right\rfloor \leq& \ck{2}{n}{2} &\leq 2n-4,
\label{eq:geothmn4}\\
 \frac{n^2-2n-3}{8} \leq& \ck{3}{n}{2} &\leq n^2-n,
\label{eq:geothm5}\\
  c(k,d) \frac{n^{\lfloor(k+1)/2\rfloor}}{(\log n)^{\lfloor(2d-1-k)/2\rfloor}}
  \leq&
  \ck{k}{n}{d}&
  \leq 2{\binom{n}{k-1}}
\label{eq:kodd}
\end{eqnarray}
\end{thm}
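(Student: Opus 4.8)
The plan is to treat the four lines separately, since each concerns a different regime, but with a common toolkit: upper bounds come from explicit constructions of families of convex sets (typically half-planes, or more generally sets cut out by a bounded number of hyperplanes), and lower bounds come from an Erd\H os--Szekeres-type or a counting/shattering argument showing that too few convex sets cannot distinguish all $k$-subsets of a carefully chosen point configuration.

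First, for line \eqref{line} and the upper bound in \eqref{eq:geothmn4}: put the $n$ points in convex position (or perturb a convex polygon into general position). The $k$-subsets of a point set $X$ in convex position that are realized as $F\cap X$ for a convex $F$ are exactly the subsets whose own convex hull contains no other point of $X$; for $k=1$ these are all singletons, for $k=2$ the ``short'' chords. I would take $\F$ to be the $2n$ half-planes bounded by lines through consecutive pairs of points (each such line in its two orientations), and check that any two distinct $1$-subsets, respectively $2$-subsets, are containment-separated, arriving at the count $2n-4$ after discarding the degenerate cases at the two ends. The matching lower bound $\lfloor (n-2)/2\rfloor$ for $\ck 2n2$ should follow by an incidence/parity argument: with $X$ in convex position, a convex set $F$ contains a contiguous arc of $X$, so the family of ``contained arcs'' over $\F$ must be rich enough to separate all $\binom n2$ pairs of vertices viewed as chords, and a short double-counting of arc endpoints forces $|\F|\ge\lfloor(n-2)/2\rfloor$. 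For \eqref{eq:geothm5}, the quadratic upper bound $n^2-n$ comes from taking all $\binom n2$ lines through pairs of points (again doubly oriented, minus degeneracies), and the lower bound $(n^2-2n-3)/8$ from the observation that a convex $F$ in convex position still captures an arc, and the number of $3$-subsets ``of the same combinatorial type'' that must be pairwise separated is quadratic; I would reduce this to the $k=2$ argument applied inside nested arcs.

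The heart of the theorem is line \eqref{eq:kodd}. For the upper bound $2\binom n{k-1}$: given $X$ in general position in $\Red$ with $k\le d+1$, take for every $(k-1)$-subset $S$ of $X$ the affine hyperplane\footnote{Any generic hyperplane when $k-1<d$.} spanned (together with enough auxiliary generic directions) by $S$, and let $\F$ consist of the two closed half-spaces it bounds. Given two distinct $k$-subsets $A\ne B$, pick a point $a\in A\setminus B$; I claim that a suitable half-space through a $(k-1)$-subset $S\subseteq A\setminus\{a\}$ separates $A$ from $B$ --- this is where general position is used, guaranteeing that the hyperplane through $S$ can be tilted to have all of $A$ on one side except $a$, which lands strictly on the other. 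Counting half-spaces gives $2\binom n{k-1}$. The lower bound $c(k,d)\, n^{\lfloor(k+1)/2\rfloor}/(\log n)^{\lfloor(2d-1-k)/2\rfloor}$ is the main obstacle and the place where the quoted bound $\ck 1n d\ge cn/\log^{d-1}n$ from \cite{GT13} enters. The strategy is a product/amplification construction: take a ``hard'' configuration $Y$ of roughly $n^{1/\lfloor(k+1)/2\rfloor}$ points in a lower-dimensional space on which $1$-subsets are hard to separate, and build $X$ as a carefully placed union (a ``blow-up'' along $\lfloor(k+1)/2\rfloor$ independent directions) so that containment-separating the $k$-subsets of $X$ forces one to separate the points of each copy of $Y$ independently; a convex set in $\Red$ restricted to each low-dimensional flat still behaves like a convex set there, so the per-copy lower bounds multiply, producing the exponent $\lfloor(k+1)/2\rfloor$ on $n$, while the codimension bookkeeping $2d-1-k$ produces the $\log$ denominator. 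Making the blow-up genuinely force independence --- i.e.\ ruling out a single convex set that cleverly handles several copies at once --- is the delicate step; I expect it to require choosing the copies of $Y$ in ``convex position relative to one another'' (e.g.\ on a moment-like curve) and an Erd\H os--Szekeres argument to extract a sub-configuration on which every convex set induces a product structure. The even/odd split in the exponent $\lfloor(k+1)/2\rfloor$ reflects that one can pack two ``coordinates worth'' of hardness into each $k$-subset when $k$ is even, and I would organize the induction on $k$ in steps of $2$ accordingly.
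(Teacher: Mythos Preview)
There are several genuine gaps. First, the direction of quantifiers is reversed in your treatment of upper bounds: $\ck knd$ is a maximum over all $n$-point sets $X$ in general position, so to prove $\ck2n2\le 2n-4$ you must handle an \emph{arbitrary} $X$, not place the points in convex position yourself. The paper deals with this by the monotonicity $\ck knd\le\ck kn{d-1}$ (project $X$ generically to $\Red^{\,d-1}$ and pull back separators), reducing \eqref{eq:geothmn4} to \eqref{line}; it similarly reduces the upper bound in \eqref{eq:kodd} to $d=k-1$, where $\conv(A)$ is a full $(k-1)$-simplex and the half-spaces through its facets are exactly the supporting half-spaces one needs. Your direct argument in $\Red$ is muddled: a half-space ``with all of $A$ on one side except $a$'' does not contain $A$, so it cannot containment-separate $A$ from $B$ in the required sense.

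Second, and more seriously, your lower bounds for \eqref{eq:geothmn4} and \eqref{eq:geothm5} rest on the claim that for $X$ in convex position, any convex $F$ meets $X$ in a contiguous arc. This is false (a thin strip through the center of a regular hexagon picks up two opposite arcs), and the arc-counting and ``nested arcs'' arguments built on it collapse. The paper instead constructs $X$ \emph{not} in convex position: for \eqref{eq:geothmn4} a point $p$ together with an arc of a circle, and for \eqref{eq:geothm5} a regular $2m$-gon $X_{\mathrm{ex}}$ together with a short segment $X_{\mathrm{in}}$ near the origin, so that a convex set containing two diameters of $X_{\mathrm{ex}}$ must swallow all of $X_{\mathrm{in}}$. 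Finally, for the lower bound in \eqref{eq:kodd} your product/blow-up plan is not the paper's approach and the step you flag as ``delicate'' is in fact the whole argument; as written there is nothing preventing one convex set from serving many copies at once. The paper proceeds differently: it takes $\frac{k-1}2$ antipodal pairs on the unit sphere together with a K\'arolyi--Valtr configuration $X_{\mathrm{in}}$ in a low-dimensional flat through the origin, and shows that any convex set containing $\frac{k-1}2$ antipodal pairs can have only $O(\log^{m-1}n)$ points of $X_{\mathrm{in}}$ on its boundary (since those boundary points are forced into convex position inside the flat). Counting the pairs $P(H,x)$ then yields the exponent $\lfloor(k+1)/2\rfloor$ directly, with no product or independence step.
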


It is a challenging problem to narrow the gap between the two bounds in
\eqref{eq:kodd}.
Theorems~\ref{thm:logP1} and \ref{thm:logPalpha} are shown in
Section~\ref{sec:logP}, Theorem~\ref{thm:satcond} in Section~\ref{sec:satcond},
Theorem~\ref{thm:VC} in Section~\ref{sec:VC}.
Theorem~\ref{thm:ck} is proved in Sections~\ref{sec:23dim}. 
We briefly discuss intersection-separation, a relative of
containment-separation, in Section~\ref{sec:intersectionseparation}.

\section{Proofs of Theorems~\ref{thm:logP1} and
\ref{thm:logPalpha}}\label{sec:logP}

\begin{proof}[Proof of Theorem~\ref{thm:logP1}]
We replaced our original proof of the first part of this theorem by a more 
elegant argument of Andr\'as M\'esz\'aros \cite{M14}, which was submitted as a 
solution to 
our problem at the Mikl\'os Schweitzer competition in 2014.

We regard $V=2^X$
as a vector field over $\mathbb F_2$ with respect to the symmetric difference 
of sets, which we denote by $+$. As $|X_i|\le n$ for all $i$ we can find a
family $U$ of at most $\lceil\log n\rceil$ subsets of $X$ (not 
necessarily in $\F$) that separates each $X_i$. Let $W$ denote the 
linear subspace of $V$ spanned by $U$. The translates of $W$ partition $V$, and 
thus, there is a translate $W+c$, more than half of whose members are in $\F$. 
Pick a $d\in (W+c) \cap \F$ and consider the set $\{x\in W:\; x+d\in\F\}$. 
This set has cardinality larger than $|W|/2$, so it spans $W$, and thus, it
contains a basis $Z$ of $W$. Now, $|Z|=\dim 
W\leq |U|\le\lceil\log n\rceil$. We claim that the set $S=\{d\}\cup
\{z+d:\;z\in Z\}$ separates each  $X_i$. To see this, observe that for any
pair of elements $x,y\in X$, the set of 
those elements of $V$ that do not separate $x$ and $y$ form a linear subspace 
of $V$, so if $S$ did not separate $x$ and $y$ neither did any set generated
by $S$. But this is not possible for $x,y\in X_i$ as $S$ generates each
element $z$ of $Z$ through
$z=d+(z+d)$ and thus $S$ also generates the subspace $W$ including the sets in
$U$, one of which separates $x$ from $y$. This finishes the proof of the first 
part of the theorem.

Now we prove the second part.
Let $X$ be the union of $k=2^{n-1}$, pairwise disjoint $n$-element sets, let
$\F$ consist of the elements of $2^X$ that intersect at least one class $X_i$
in $\emptyset$ or $X_i$.
Then $2^X \setminus \F$ has $\left( 2^n-2\right)^{2^{n-1}}$ elements, and
\[
\frac{|2^X \setminus \F|}{|2^X|} = \left( 1-\frac{1}{2^{n-1}} \right)^{2^{n-1}}
< \frac{1}{e} < \frac{1}{2}.
\]
Hence, $\F$ contains more than half of the elements of $X$.
On the other hand, assume that $\G \subset \F$ separates $X$, and let $G \in
\G$.
Let $X_i$ be a class in $X$ which is intersected by $G$ either in $\emptyset$
or in $X_i$.
Clearly, to separate $X_i$, we need at least $\lceil \log n \rceil$ more
elements of $\F$, which implies the assertion.
\end{proof}

Before presenting the details of the proof of Theorem~\ref{thm:logPalpha} we
introduce some notation and give a sketch of the proof.

In the setup of both Theorems~\ref{thm:logP1} and \ref{thm:logPalpha} we are
given a base set $X$ partitioned into the parts $X_i$ of size at most $n$ and
we want to refine this partition by selecting a small subset of a family
$\F\subseteq2^X$. In the proof of the latter theorem we will select this
separating subfamily in phases. After
selecting a subfamily $\F_1\subseteq\F$ we measure the progress by the maximal
size $m$ of a yet un-separated part of some $X_i$. Clearly, we need at least
$\log(n/m)$ sets to partition a set of size $n$ to sets of size $m$ or
smaller. Accordingly, we call the quantity $|\F_1|-\log(n/m)$ the
{\em loss} incurred in decreasing the part size from $n$ to $m$.

In this
terminology we can phrase Theorem~\ref{thm:logP1} as stating that if the
density of $\F$ is above $1/2$, then we can decrease the part size all the way 
to
$1$ with a loss of less than $2$. Similarly, Theorem~\ref{thm:logPalpha}
states that if $\F$ has density $\alpha$ and separates all $X_i$, then we can
decrease the part size to $1$ with a loss of
$O(\log(1/\alpha)\log\log(1/\alpha))$. Note that a loss of $\log(1/\alpha)$ is
unavoidable in certain cases, for example if we have a small set $Y\subset
X_1$ and $\F$ consists of all sets disjoint from $Y$, containing $Y$, or
having size $1$. We are not sure if the $\log\log(1/\alpha)$ factor is needed.

We will prove Theorem~\ref{thm:logPalpha} by constructing the separating
family in phases. The first stage will reduce the part size to at most
$1/\alpha$ for a loss of less than $2$. This stage is a generalization of
Theorem~\ref{thm:logP1} and proved very similarly.

In the second phase we further reduce the
part size to $O(\log(1/\alpha))$ for a loss of $O(\log(1/\alpha))$. We will
select the separating sets in this phase one by one, but we remark that
selecting them at once one can decrease the loss incurred in this phase to a
constant for the small price of reducing the part size to $O(\log^2(1/\alpha))$
or even $O(\log(1/\alpha)\log\log(1/\alpha))$ (instead of $O(\log(1/\alpha))$
as presented here). Unfortunately, the loss
incurred in the third phase is much larger and dominates the losses in the
other phases.

In the third phase we make sure that all but $O(\log(1/\alpha))$ elements of
$X$ form singleton parts, while in the final fourth phase we separate the
remaining few non-singleton parts to ``atomic'' parts not even separated by
$\F$.

Let $A$ and $Y$ be finite sets. We say that $A$ {\em cuts $Y$ well} if
$|Y|/4\le|Y\cap A|\le3|Y|/4$. In the second phase the following trivial
observation is going to be useful:

\begin{lemma}\label{well}
Let $Y$ be a finite set of size $m\ge2$. The density $\alpha$ of
the subsets of $Y$ that do not cut $Y$ well satisfies $\alpha\le1/2$ and
$\alpha\le2^{-m/10}$.
\end{lemma}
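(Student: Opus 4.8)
The plan is to reduce both inequalities to a single one-sided binomial tail bound, estimate that tail by a Chernoff-type inequality, and dispose of the small values of $m$ by direct inspection. Since we only consider subsets $A$ of $Y$, we have $|Y\cap A|=|A|$, so $A$ fails to cut $Y$ well precisely when $|A|<m/4$ or $|A|>3m/4$; these two conditions define disjoint families of subsets, and the complementation map $A\mapsto Y\setminus A$ is a bijection between $\{A\subseteq Y:\ |A|<m/4\}$ and $\{A\subseteq Y:\ |A|>3m/4\}$. Hence
\[
\alpha \;=\; \frac{2}{2^m}\sum_{0\le i<m/4}\binom mi \;=\; 2\,\Pr\bigl[\,|A|<m/4\,\bigr],
\]
where $A$ is a uniformly random subset of $Y$. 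So $\alpha\le\tfrac12$ is equivalent to $\Pr[|A|<m/4]\le\tfrac14$, and $\alpha\le 2^{-m/10}$ is equivalent to $\Pr[|A|<m/4]\le 2^{-m/10-1}$; note that at $m=2$ one has $\alpha=\tfrac12$ exactly, so the constant $\tfrac12$ cannot be lowered.

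I would bound the tail by Hoeffding's inequality, applied to $|A|$ regarded as a sum of $m$ independent $\{0,1\}$-variables of mean $1/2$:
\[
\Pr\bigl[\,|A|<m/4\,\bigr]\;\le\;\Pr\bigl[\,|A|\le m/4\,\bigr]\;\le\; e^{-m/8},
\]
whence $\alpha\le 2e^{-m/8}$. A one-line computation shows $2e^{-m/8}\le 2^{-m/10}$ as soon as $m\ge 13$, and for such $m$ also $2^{-m/10}\le 2^{-13/10}<\tfrac12$, so both inequalities hold for every $m\ge 13$. For the finitely many remaining values $2\le m\le 12$ I would simply evaluate $\sum_{0\le i<m/4}\binom mi$ and verify $\alpha\le\tfrac12$ and $\alpha\le 2^{-m/10}$ in each case; all of these are comfortably satisfied, the tightest being $m=2$.

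This is the ``trivial observation'' the text refers to, so there is no serious obstacle; the only point requiring care is the interface between the asymptotic estimate and the advertised clean constants, since the Chernoff bound leaves enough slack only past $m\approx 13$ and one must not skip the small cases. Should one wish to lower that threshold, replacing Hoeffding's bound by the sharper large-deviation bound with exponent $D(\tfrac14\,\|\,\tfrac12)=\tfrac34\ln 3-\ln 2$ pushes it down to $m=12$, but a finite check remains necessary.
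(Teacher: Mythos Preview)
Your argument is correct. The paper states Lemma~\ref{well} as a ``trivial observation'' and gives no proof, so there is nothing to compare against; your reduction by complementation to a one-sided binomial tail, the Hoeffding bound $\alpha\le 2e^{-m/8}$, and the finite check for $2\le m\le 12$ together constitute exactly the routine verification the authors had in mind.
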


In the third phase we use the following result of Brace and Daykin \cite{BD71}.

\begin{thm}[Brace and Daykin, 1971]\label{thmbd}
Let $t>1$ be an integer, $Y$ be a set of size $s$ and $\F'$ be a subset of
$2^Y$ of density exceeding $(t+2)/2^{t+1}$. If $\bigcup\F'=Y$, then there are 
$t$
elements of $\F'$ whose union is also $Y$.
\end{thm}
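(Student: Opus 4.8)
The plan is to prove the contrapositive: if $\bigcup\F'=Y$ and no $t$ members of $\F'$ have union $Y$, then $|\F'|\le(t+2)2^{s-t-1}$, so the density of $\F'$ does not exceed $(t+2)/2^{t+1}$. We may assume $|\F'|\ge t$, as otherwise the density is already far below the threshold; then no multiset of at most $t$ members of $\F'$ has union $Y$ either, since the distinct sets in such a multiset could be padded to $t$ distinct members. The argument runs by induction on $s$, and within a fixed $s$ by induction on $t$. The base case $t=2$ is immediate: ``no two members cover $Y$'' says that $\{Y\setminus A:A\in\F'\}$ is a pairwise intersecting family of subsets of $Y$, whence $|\F'|\le2^{s-1}=4\cdot2^{s-3}$.

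First I would reduce to the case that $\F'$ is a \emph{down-set} (closed under passing to subsets). Given $y\in Y$, the down-compression replacing each $A\in\F'$ with $y\in A$ and $A\setminus y\notin\F'$ by $A\setminus y$ is injective and contracts every set into itself, so it leaves $|\F'|$ unchanged and cannot create a $t$-cover of $Y$. If it causes $\bigcup\F'$ to drop below $Y$ --- which happens exactly when no member of $\F'$ contains $y$ while also having $A\setminus y\in\F'$ --- then the new family lies inside $2^{Y\setminus y}$, so the inductive bound in dimension $s-1$ gives $|\F'|\le(t+2)2^{s-t-2}<(t+2)2^{s-t-1}$ and we are done. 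Otherwise the union is preserved; applying these compressions in all directions (the quantity $\sum_{A\in\F'}|A|$ strictly decreases at each nontrivial step, so the process halts) we either finish along the way or reach a down-set of the original size with $\bigcup\F'=Y$ and no $t$-cover.

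So assume $\F'$ is such a down-set; then every singleton of $Y$ lies in $\F'$. I would next record the structural fact that every member of $\F'$ has size at most $s-t$: if some $A\in\F'$ omitted at most $t-1$ points of $Y$, then $A$ together with the singletons of those points would be at most $t$ members of $\F'$ covering $Y$. Now fix $y\in Y$. Down-closedness identifies the link $\{A\setminus y:A\in\F'\}$ with $\{A\in\F':y\notin A\}$, so $|\F'|$ exceeds $|\{A\in\F':y\notin A\}|$ by exactly the degree $d_y=|\{A\in\F':y\in A\}|$; moreover $\{A\in\F':y\notin A\}\subseteq2^{Y\setminus y}$ has union $Y\setminus y$ and no $(t-1)$-cover of it, since adjoining to such a cover a member of $\F'$ that contains $y$ would yield a $t$-cover of $Y$. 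By the inductive bound (dimension $s-1$, parameter $t-1$), $|\{A\in\F':y\notin A\}|\le(t+1)2^{s-t-1}$, so it suffices to produce one element $y$ with $d_y\le2^{s-t-1}$. If some point lies in a unique maximal member $M$ of $\F'$, this is immediate, as $d_y=2^{|M|-1}\le2^{s-t-1}$ by the size bound; and if some point lies in \emph{every} maximal member, one can split off that coordinate and close the induction in dimension $s-1$.

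The remaining case --- $\F'$ a down-set in which every point lies in at least two but not all of the maximal members --- is the crux, and it is precisely where the full force of ``no $t$ members cover $Y$'' is needed, in a way no averaging can supply: in the extremal configuration, where $\F'$ consists of all subsets of $Y$ meeting a fixed $(t+1)$-element set $T$ in at most one point (a down-set with union $Y$, with no $t$-cover since any $t$ of its members jointly omit a point of $T$, and with exactly $(t+2)2^{s-t-1}$ members, so the bound is tight), the minimum degree $2^{s-t-1}$ is attained \emph{only} at the $t+1$ special points of $T$, while the average degree is strictly larger. The plan is to locate a low-degree point by a careful extremal analysis of the system of maximal members, using that no $t$ of them cover $Y$ to prevent them from clustering too tightly around any single point; this is the combinatorial heart of the theorem of Brace and Daykin~\cite{BD71}, to which I would ultimately appeal for this step. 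Granting $d_y\le2^{s-t-1}$, the two displayed estimates give $|\F'|\le(t+1)2^{s-t-1}+2^{s-t-1}=(t+2)2^{s-t-1}$, completing the induction.
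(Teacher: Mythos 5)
This statement is not proved in the paper at all: it is quoted verbatim from Brace and Daykin \cite{BD71} and used as a black box in the proof of Theorem~\ref{thm:logPalpha}, so there is no internal argument to compare yours against. Judged as a proof in its own right, your write-up is a correct and well-organized \emph{reduction}, but it is not a proof: you say explicitly that in the remaining case you ``would ultimately appeal'' to \cite{BD71}. The contrapositive formulation, the $t=2$ base case via intersecting families, the down-compression to a down-set (with the correct observation that compression cannot create a $t$-cover and that a dropped union closes the induction in dimension $s-1$), the bound $|A|\le s-t$ for all members, and the decomposition $|\F'|=|\{A:y\notin A\}|+d_y$ with the $(t-1)$-inductive bound $(t+1)2^{s-t-1}$ on the first term are all sound. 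But the entire content of the theorem is then concentrated in the unproved claim that some point has degree $d_y\le 2^{s-t-1}$, and your own analysis of the extremal family (all sets meeting a fixed $(t+1)$-set in at most one point) shows why no soft argument delivers this: the minimum degree is attained only at $t+1$ special points and is strictly below the average. That is the theorem, and it is missing.

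A secondary, smaller gap: the sub-case ``some point lies in every maximal member'' is dispatched with the phrase ``split off that coordinate and close the induction in dimension $s-1$,'' but the natural implementation does not close. Writing $\F'=\F'_0\cup\{A:y\in A\}$ with $\F'_0=\{A:y\notin A\}$ and applying the inductive bounds $(t+1)2^{s-t-1}$ to $\F'_0$ and $(t+2)2^{s-t-2}$ to the link $\{A\setminus y:y\in A\}$ yields $(3t+4)2^{s-t-2}$, which exceeds the target $(2t+4)2^{s-t-2}$. So this case, too, needs a genuine idea rather than a one-line reduction. In short: the skeleton is right and matches the known inductive proofs of the Brace--Daykin theorem, but the load-bearing minimum-degree lemma is asserted, not proved, so the proposal does not establish the statement independently of \cite{BD71}.
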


\begin{proof}[Proof of Theorem~\ref{thm:logPalpha}]
In the first phase of the selection of the separating subfamily of $\F$ we
decrease the maximal size of a part from $n$ to at most $1/\alpha$. In case
$n\le1/\alpha$ one can simply skip this phase.

We mimic the proof of Theorem~\ref{thm:logP1}. We assume without loss of
generality, that $n=\max_i|X_i|$ and set $r=\lceil\log n\rceil$. We let
$V=\mathbb F_2^r$ be the $r$-dimensional vector space over the two element
field with the usual inner product and choose $f:X\to V$ that is injective on
each $X_i$. We set $W=\{O_x\mid x\in V\}$, where $O_x$ consists of the
elements $a$ of $X$ with $f(a)$ not orthogonal to $x$. We regard $2^X$ as a
group with the symmetric difference operation (denoted by $+$) and note that
$W$ is a subgroup. As the density of $\F$ is $\alpha$ we can choose $C\in\F$
such that the density of $\F$ within the coset $C+W$ is at least $\alpha$,
that is the set $S=\{ x \in V: C+O_x \in \F \}$ satisfies $|S|\ge\alpha2^r$.

With $\alpha\le1/2$ the set $S$ does not necessarily generate the whole of
$V$. But we can still choose a basis $B\subseteq S$ for the subspace of $V$
generated by $S$ and it is easy to see that the set $\F_1=\{C+O_x\mid x\in
B\}\cup\{C\}$ of size $|B|+1$ separates each $X_i$ into parts of size at most
$2^{r-|B|}\le1/\alpha$. Thus, we have decreased the part size to at most
$1/\alpha$ for a loss of less than $2$. We have $|\F_1|<\log n+2$.

In the second phase we do similarly as in the first phase but we do not have a
linear structure on $V$ any more. Let $m$ be the maximal size of a part in the
current partition of $\F$. We take $V$ to be an $m$ element set and select a
function
$f:X\to V$ that is injective in every part. We set $W=\{f^{-1}(H)\mid
H\subseteq V\}$. As before, $W$ is a subgroup of $2^X$ (considered with the
symmetric difference). We select $C\in\F$ such that the density of $\F$ in the
coset $C+W$ is at least $\alpha$, that is, the set $S=\{H\subseteq V\mid
C+f^{-1}(H)\in\F\}$ satisfies $|S|\ge\alpha2^m$. In case $S$ contains a set
$H$ that cuts $V$ well, we choose one such set and include the sets $C$ and
$C+f^{-1}(H)$ in our separating family. This makes all the parts in the current
partition be at most $3m/4$.

By Lemma~\ref{well} the selection of the set $H\in S$ cutting $V$ well is
possible as long as $\alpha>2^{-m/10}$, that is $m>10\log(1/\alpha)$. In the
second phase we repeat the above procedure till all parts of the current
partition is of size at most $10\log(1/\alpha)$ and call $\F_2$ the set of
separating sets collected. As the maximal size of a part was at most
$1/\alpha$ in the beginning of phase two, and this maximal size decreases by a
constant factor in each round when we add two sets to $\F_2$ we have
$|\F_2|=O(\log(1/\alpha))$.

In the third phase we use a different strategy. Let $\{X'_i\mid i\in I\}$ be
the set of parts in the current partition of $X$. We call a part $X'_i$
{\em good} if there is a set $A\in\F$ cutting
$X_i$ well and we call $X'_i$ {\em bad} if it has at least $2$ elements, but
it is not good. Let $G=\{i\in I\mid X'_i\mbox{ is good}\}$. For $i\in G$ we
select a
set $B_i\subset2^{X'_i}$ of size $|B_i|=2^{|X'_i|-1}$ such that all sets in
$B_i$ cuts $X'_i$ well and we have a set $A\in\F$ with $A\cap X'_i\in
B_i$. This is possible as by Lemma~\ref{well} at least half the subsets of
$X'_i$ cut $X'_i$ well.

We define the function $f:2^X\to2^G$ by setting $f(A)=\{i\in G\mid A\cap
X'_i\in B_i\}$. Let $\F'=f(\F)$. Clearly, $f$ takes all values an equal number
of times, thus the density of $\F'$ in $2^G$ is at least the density $\alpha$
of $\F$ in $2^X$. Note that for all $i\in G$ we have $A\in\F$ with $i\in f(A)$,
thus we have $\bigcup\F'=G$. We choose $t=O(\log(1/\alpha))$ such that
$(t+2)/2^{t+1}<\alpha$ and apply Theorem~\ref{thmbd} to find $t$ sets
$I_1,\ldots,I_t\in\F'$ with $\bigcup_{i=1}^tI_i=G$. We find sets $A_i\in\F$
with $f(A_i)=I_i$ and include these $t$ sets in our partitioning family. Note
that if $x\in X$ is contained in a good part $X'_i$ of size $m$, then the
size of the part containing $x$ after considering these $t$ new separating
sets is at most $3m/4$.

We repeat the above procedure for
$\lceil\log(10\log(1/\alpha))/\log(4/3)\rceil=O(\log\log(1/\alpha))$ times and
obtain $\F_3$ as the union of all the elements of $\F$ we selected. We clearly
have $|\F_3|=O(\log(1/\alpha)\log\log(1/\alpha))$.

We call $x\in X$ fully separated if it forms a singleton part in the current
partition after the third phase. Clearly, if $x$ is not fully separated, it
must have been in a bad part at some time. Let us choose the earliest bad part
containing $x$ and consider all the distinct sets $Y_1,\ldots,Y_j$
obtained this way from not fully separated elements. Clearly, these sets are
pairwise disjoint. Let $Y=\bigcup_{i=1}^jY_i$. By Lemma~\ref{well} the ratio of
subsets of $X$ not cutting $Y_i$ well is at most $2^{-|Y_i|/10}$. A random
subset $A\subseteq X$ intersects the sets $Y_i$ independently, thus the
probability that it cuts none of the $Y_i$ well is at most
$\prod_{i=1}^j2^{-|Y_i|/10}=2^{-|Y|/10}$. By the definition of bad parts, no
set $A\in\F$ cuts any of the sets $Y_i$ well, so we have
$\alpha\le2^{-|Y|/10}$. Therefore, all non-singleton parts in the current
partition after the third phase is contained in the set $Y$ of size at most
$10\log(1/\alpha)$.

Finally in the last phase we select any set in $\F$ that refines our current
partition and repeat this process until no further refinement is
possible. Clearly, the set $\F_4$ selected in this phase satisfies
$|\F_4|<|Y|=O(\log(1/\alpha))$.

The family $\F_1\cup\F_2\cup\F_3\cup\F_4$ separates any pair of elements in
the same part $X_i$ that is separated by $\F$ and the size of this set is
$\log n+O(\log(1/\alpha)\log\log(1/\alpha))$. This finishes the proof of
Theorem~\ref{thm:logPalpha}.
\end{proof}

\section{Proof of Theorem~\ref{thm:satcond}}\label{sec:satcond}
For
$0<\varepsilon<1$, we call a constraint \emph{$\varepsilon$-good} if at least
$\varepsilon|\F|$ members of $\F$ satisfy it, and \emph{$\varepsilon$-bad} 
otherwise.

The proof of the first statement of Theorem~\ref{thm:satcond} consists of two
steps.
First, a standard application of the probabilistic method shows that for any
$\varepsilon>0$, all $\varepsilon$-good constraints can be satisfied by $(\log
N)/\varepsilon$ randomly chosen members of $\F$.
Second, we show that if $\varepsilon$ is set sufficiently small as a function of
$m$ and $\alpha$ but independent of $|X|$ and $N$, then the number of
$\varepsilon$-bad constraints is bounded by another value $Z$ depending on $m$ 
and
$\alpha$, and independent of $|X|$ and $N$.
Since all constraints are satisfiable, it means that adding $Z$ (well
chosen) members of $\F$ to the $(\log N)/\varepsilon$ random members satisfying
the $\varepsilon$-good constraints we obtain a collection satisfying all the $N$
constraints.

To prove the first step, let $C$ be a $\varepsilon$-good constraint, and choose
$(\log N)/c$ members of $\F$ randomly, uniformly.
\[
\Pe(C \mbox{ is not satisfied by any of the chosen sets})\leq
(1-c)^{\frac{\log N}{c}}
<
\frac{1}{N}.
\]
Thus, with non-zero probability, all the at most $N$ $\varepsilon$-good
constraints are satisfied by the randomly chosen members of $\F$.

For the second step let $a$ be a large enough number depending on $\alpha$ and
$m$ to be set shortly. Assume that the supports of $a2^m$ distinct
$\varepsilon$-bad constraints form a ``sunflower'', that is any two of them
intersect in the same set $C$. Let $b=|C|$ and note that $0\le b\le
m-1$. Clearly, we can find $a$ of these constraints, say $(V_i,W_i)$ for $1\le
i\le a$ that contain the elements of $C$ ``on the same side'', that is $C$ is
the support of a constraint $(V_0,W_0)$ with $V_0\subseteq V_i$ and
$W_0\subseteq W_i$ for $i=1,\ldots,a$. Now consider a uniform random subset
$A\subseteq X$. This set satisfies the constraint $(V_0,W_0)$ with probability
$2^{-b}$. If it satisfies $(V_0,W_0)$, then the conditional probability that
it also satisfies $(V_i,W_i)$ is $2^{b-m}$ for any $i\ge1$. From the sunflower
property we see that assuming $A$ satisfies $(V_0,W_0)$ the events that $A$
satisfy $(V_i,W_i)$ are mutually independent, so the overall probability $P$
that $A$ satisfies at least one of them is exactly
$$P=2^{-b}(1-(1-2^{b-m})^a).$$
On the other hand, $A$ has the chance at least $\alpha$ to be in $\F$, and
assuming it is in $\F$, it has a chance of less than $\varepsilon$ to satisfy 
any
of those constraints. Therefore we have
$$P<(1-\alpha)+a\alpha\varepsilon.$$
Let us select $\varepsilon>0$ small enough and $a$ large enough (depending on 
$m$
and $\alpha$) such that
$$2^{-b}(1-(1-2^{b-m})^a)\ge(1-\alpha)+a\alpha\varepsilon$$
holds for any $0\le b\le m-1$. This is possible as increasing $a$ the left
hand side approaches $2^{-b}\ge2^{1-m}$ and for fixed $a$, and $\varepsilon$
approaching $0$, the right hand side approaches $1-\alpha<2^{1-m}$.

With this choice of $\varepsilon$ and $a$ the imminent contradiction in the last
three displayed equations shows that the supports of no $a2^m$ $\varepsilon$-bad
constraints form a sunflower. By the sunflower lemma of Erd\H os and Rado
\cite{ER60} we find that the total number of $\varepsilon$-bad constraints is at
most $Z=m!2^m(a2^m)^m$. (The extra $2^m$ factor is coming from the possibility
that many $\varepsilon$-bad constraints may have the same support.) This bound
is independent of $N$ and $|X|$ as claimed and finishes the proof of the first
statement of Theorem~\ref{thm:satcond}.

For the second part, for any $m$ and $N$ we construct a base set $X$, a family
$\F$ of density strictly above $1-2^{1-m}$ and $N$ one
sided constraints $(V_i,\emptyset)$ of size $m$, all satisfied by exactly one
member of the family $\F$ and such that different constraints are satisfied by
different members of $\F$. Thus, all constraints are satisfied by $\F$ but no
subset of cardinality less than $N$ satisfies them all.

For this we set $|X|=N+m-1$ and identify a subset $Y\subset X$ of size
$|Y|=m-1$. Let $\F$ consist of all the subsets of $X$ not containing $Y$ plus
all the $m$ element subsets. We select the constraints $(V_i,\emptyset)$ with
all possible $m$-subsets $V_i$ of $X$ containing $Y$.

\section{Proof of Theorem~\ref{thm:VC}}\label{sec:VC}

Assume without loss of generality that $U$ is the set of positive integers.
Since the VC-dimension of $\mathcal{F}$ is $d$, for any $(d+1)$-element subset
$A=\{a_1, a_2, \ldots, a_{d+1}\}\subset U$ with $a_1<a_2<\ldots<a_{d+1}$, there
is a set $T(A)\subseteq\{1,2,\ldots,d+1\}$ such that no $F\in \mathcal{F}$
satisfies $F\cap A=\{a_i | i\in T(A)\}$. If there is more than one such set, we
fix $T(A)$ arbitrarily, and we call it the \emph{type} of $A$.

By Ramsey's Theorem, there is a set $X=\{y_1,y_2,\dots\, y_n\}\subset U$ with
$y_1<\ldots<y_n$, such that all $(d+1)$-element subsets of $X$ have the same
type $T$.

Let us call the index $1\le i\le d$ {\em regular} if $T$ separates
$i$ from $i+1$ and {\em singular} otherwise. We let $k$ stand for the number
of singular indices.

Consider any $F\in \mathcal{F}$. We claim that there is a set $F^*$ of at most
$k$ elements of $X$ and a partition of $X$
into at most $d-k+1$ intervals such that the symmetric difference $F+F^*$
does not separate any two elements in the same interval. To see this
consider the greedy process looking for indices
$i_1<i_2<\ldots<i_{d+1}$ such that for the set
$H=\{y_{i_1},\ldots,y_{i_{d+1}}\}$ we have $H\cap F=\{y_{i_j}\mid j\in
T\}$. As such a set could not have type $T$, we cannot find all these indices,
nevertheless, some of the indices can be found by a greedy process (for
example, if $1\in T$, then we start with $i_1=\min\{j\mid y_j\in
F\}$). We can satisfy the claim by making $F^*$ consist of the elements
$y_{i_j}$, where $j$ is singular and starting a new interval in the partition
of $X$ at every element $y_{i_j}$ with $j$ regular.

Now assume the family $\{F_1,\ldots,F_m\}\subseteq\F$ separates $X$. Let
$F_i^*$ be the corresponding sets of size at most $k$ and consider the set
$V=X\setminus\bigcup_{i=1}^mF_i^*$. We have $|V|\ge n-mk$ determining at
least $n-mk-1$ neighboring pairs of elements. By the above
claim each set $F_i$ separates at most $d-k$ of these neighboring elements,
thus we must have $n-mk-1\le m(d-k)$, that is, $m\ge(n-1)/k$. This completes
the
proof
of Theorem~\ref{thm:VC}.

\section{Containment-separation}\label{sec:23dim}

In this section we prove Theorem~\ref{thm:ck}.

We start with the proof of (\ref{line}). Let $H=\{x_1,\ldots,x_n\}$ be an
arbitrary $n$ element subset of the real line and assume
$x_1<\ldots<x_n$. Note that the family
$\{[x_1,x_i]\mid1<i<n\}\cup\{[x_i,x_n]\mid1<i<n\}$ of $2n-4$ convex sets
containment-separates all the pairs in $H$. This proves $c^1_2(n)\le2n-4$. On
the other hand realize that no convex set containment-separates more than one
of the $2n-4$ pairs $(\{x_1,x_i\},\{x_1,x_{i+1}\})$ for $1<i<n$ and
$(\{x_i,x_n\},\{x_{i-1},x_n\})$ for $1<i<n$. This proves $c^1_2(n)\ge2n-4$.

We will use the following monotonicity property:
\begin{equation*}
  \ck{k}{n}{d}\leq \ck{k}{n}{d-1},
\end{equation*}
  for all choices of $k\le d$ and $n$. To see this consider a set $X$ of $n$
  points in general position in
  $d$-space and find a projection $\pi$ to $(d-1)$-space such that $\pi(X)$ is
  again $n$ points in general position. A generic projection $\pi$ satisfies
  this. Now find $\ck kn{d-1}$ convex sets to containment
  separate all the $k$-sets of $\pi(X)$ and consider the inverse images of
  these sets for the projection $\pi$. Clearly, these sets are convex and they
  containment-separate the $k$-subsets of $X$.

A similar monotonicity also holds in $k$ if $n>k+1$:
$$\ck knd\le\ck{k+1}nd.$$
This is because any collection of sets containment-separating the
$(k+1)$-subsets of an $n$-set $H$ also containment-separate the
$k$-subsets. Indeed, if $A$ and $B$ are $k$-subsets, then any set containment
separating $A\cup\{x\}$ from $B\cup\{x\}$ also containment-separates $A$ from
$B$. This trick works if we can choose $x\in H$ outside $A\cup B$. In case
$A\cup B=H$ we pick $x\in A\setminus B$ and $y\in B\setminus A$ and use that
any set containment-separating $A\cup\{y\}$ from $B\cup\{x\}$ also containment
separates $A$ from $B$. This latter trick fails if $|A\cap B|=k-1$, but then
$|A\cup B|=k+1<n$.

Next we prove \eqref{eq:geothmn4}.
The upper bound follows from \eqref{line} via the monotonicity mentioned
above.

For the lower bound in \eqref{eq:geothmn4} let $p=(1,1)$ and consider the set
$X$ consisting of $p$ and $n-1$ points on the unit circle around the origin,
all in the first quadrant. This set is in general position. Let us denote the
points of $X$ on the circle $x_1,\ldots,x_{n-1}$ in order of increasing
$x$-coordinate. The property of the arrangement we use is that the the convex
hull of $\{x_i,x_j,p\}$ contains all $x_k$ with $i<k<j$. This implies that any
convex set can containment-separate at most two of the pairs of
two element sets $(\{x_i,p\},\{x_{i+1},p\})$ for $1\le i\le n-1$ and the
stated lower bound on $c_2^2(n)$ follows.

Next we turn to the upper bound in \eqref{eq:kodd}. By the monotonicity
mentioned above it is enough for us to prove $\ck kn{k-1}\le2{\binom{n}{k-1}}$. 
Let us consider a set $X$ of $n$ points in general position in the
$(k-1)$-dimensional  space. Each $(k-1)$-subset of $X$ determines a
hyperplane. Consider all the closed half-spaces bounded by one of
these hyperplanes. This is a collection of $2{\binom{n}{k-1}}$ convex sets and
it containment-separates
all $k$-subsets of $X$. Indeed, if $A$ and $B$ are distinct $k$-subsets of $X$,
then either the convex hull of $A$ does not contain $B$, or vice versa. In the
former case a supporting half-space of the convex hull of $A$
containment-separates the sets.

We turn to the proof of \eqref{eq:geothm5}.
The upper bound is a special case of the upper bound in \eqref{eq:kodd}. For
the lower bound we give a construction.

Let us assume $n\ge8$. Let $X_{\mathrm{ex}}$ be the vertex set of a regular
$k$-gon around the origin for $k=2\lfloor n/4\rfloor$. We call the opposite
pairs of points in $X_{\mathrm{ex}}$ a {\em diameter}. Let us find a
point $x$ distinct from the origin but so close to it that it is contained in
the interior of the convex hull of any two diameters. We further assume that
$X_{\mathrm{ex}}\cup\{x\}$ is in general position. Let $l=n-k$ and
$X_{\mathrm{in}}=\{x_i\mid 1\le i\le l\}$, where $x_i=\frac ilx$.

Consider the pairs of 3 element sets $\{\{p,-p,x_i\},\{p,-p,x_{i+1}\}\}$,
where $p \in X_{\mathrm{ex}}$ (so $\{p,-p\}$ is a diagonal) and $1\le i\le
l-1$. No set containing no diagonal can containment-separate any of
these pairs, and convex sets containing more than one diagonal contain all
$x_i$ in their interior, so they do not separate these pairs either. A convex
set $S$ containing a single diagonal $\{p,-p\}$ separates at most one of these
pairs, since, if $x_i\in S$, then we also have $x_j\in S$ for all $j<i$.

This shows that $X_{\mathrm{ex}}\cup X_{\mathrm{in}}$ is a good choice for a
hard to separate set, but it is not in general position. Fortunately, the above
arguments are robust against small perturbations. Let us obtain $X$ as the
union of $X_{\mathrm{ex}}$ and a set $\{x'_i\mid 1\le i\le n\}$ where $x'_i$
is $\varepsilon$-close to $x_i$ but perturbed in such a way that $X$ is in
general position.

It is easy to see that if $\varepsilon>0$ is small enough, then
each convex set can containment-separate at most one of the pairs
$(\{p,-p,x'_i\},\{p,-p,x'_{i+1}\}$. To containment-separate all these pairs,
one needs at least $(l-1)k/2$ convex sets, proving the lower bound in 
\eqref{eq:geothm5}.

Finally, we prove the lower bound in \eqref{eq:kodd}. For this we need the
following result.
For any $n>d\ge 2$, K\'arolyi and Valtr
\cite{KV03} constructed a set of $n$ points in the $d$-dimensional space
which contains at most $c_d\log^{d-1}n$ points in convex position, where $c_d$
depends only on $d$. We call such a point set a {\em K\'arolyi-Valtr
construction}.

Let us assume that $k$ is odd as the case of even $k$ comes from the
monotonicity of $c_k^d(n)$ in $k$. We need to construct a set $X$ of $n$
points in general position in $d$-space, whose $k$-subsets is hard to
containment-separate with convex sets. Let $m=\frac{2d+1-k}{2}$.
Take a set $A$ of size $\frac n4$ and a set $A'$ of size $m$ such that all
the $\frac n4+m$ points in $A\cup A'$ are at unit distance from the origin and
the unit vectors corresponding to any $d$ of them are linearly
independent. Let $X_{\mathrm{ex}}=-A\cup A$, where $-A=\{-p\mid p\in A\}$. We
call the points $p$ and $-p$ in $X_{\mathrm{ex}}$ an {\em opposite pair}.
Let $F$ be an $m$-flat passing through the origin and the points of $A'$.
Consider a ball $B$ centered at the origin that is so small that the convex 
hull of any
$d$ opposite pairs of  $X_{\mathrm{ex}}$ contains $B$
in its interior. Let $X_{\mathrm{in}}$ be
an $m$-dimensional K\'arolyi-Valtr construction of $\frac{n}{4}$
points in $F\cap B$ and assume (without loss of generality) that
$X_{\mathrm{in}}$ as an $m$-dimensional set is in general position. (Note,
however, that neither $X_{\mathrm{ex}}$ nor $X_{\mathrm{in}}$ is in general
position in $d$-space.)

We claim that if a convex set $S$ contains $\frac{k-1}2$ opposite
pairs, then the points of $X_{\mathrm{in}}$ on the boundary of $S$ are in convex
position. Indeed, otherwise there would be a set $H$ of $m+1$ points
in $X_{\mathrm{in}}$, all on the boundary of $S$ and forming a simplex in $F$
such that the simplex contains yet another point $x\in X_{\mathrm{in}}$ on the
boundary of $S$. The contradiction comes from the fact that in this case the
convex hull of the union of $H$ and the opposite pairs in $S$ has $x$ in its
interior.

As a consequence, we see that if a convex set contains $\frac{k-1}2$ opposite
pairs, then its boundary contains at most $c_m\log^{m-1}n$ points of
$X_{\mathrm{in}}$.

Set $\varepsilon>0$ very small and let $X$ be a point set consisting of a
point $\varepsilon$-close to each point in $X_{\mathrm{ex}}$ and two distinct
points $\varepsilon$-close to each point in $X_{\mathrm{in}}$. We choose the
points in $X$ to be in general position and let $f:X_{\mathrm{ex}}\to X$,
$f_1:X_{\mathrm{in}}\to X$ and $f_2:X_{\mathrm{in}}\to X$ be the functions
showing our choices.

Consider a set $H$ of
$\frac{k-1}2$ opposite pairs, and a point $x\in X_{\mathrm{in}}$ and let
$P(H,x)$ be the pair $(f(H)\cup\{f_1(x)\},f(H)\cup\{f_2(x)\})$. We claim that if
$\varepsilon$ is small enough, then any convex set can containment-separate at
most $c_m\log^{m-1}n$ pairs $P(H,x)$ with a fix $H$. Indeed, to separate the
pair $P(H,x)$ the convex set must contain $f(H)$ and has to have a boundary
point on the (short) interval $f_1(x)f_2(x)$. In the limit for $\varepsilon\to0$
we find a convex set containing $H$ and having $x$ on its boundary. As we saw
above this is possible for at most $c_m\log^{m-1}n$ points $x\in
X_{\mathrm{in}}$.

If a convex set $S$ contains $d$ opposite pairs, then it contains
$X_{\mathrm{in}}$ in its interior. Therefore, for small enough $\varepsilon$, a
convex set containing $f(H)$ for a collection $H$ of $d$ opposite pairs
containment-separates no pair $P(H,x)$.

To containment-separate $P(H,x)$ a (convex) set must contain $f(H)$, so the
above bounds mean that (again, for small enough $\varepsilon$) no convex set
containment-separates more than $c_m{\binom{d-1}{\frac{k-1}2}}\log^{m-1}n$ of 
the
pairs $P(H,x)$. Comparing this with the total number of $\frac n4\binom{\frac
n4}{\frac{k-1}2}$ of the pairs $P(H,x)$ shows that we need many convex
sets to containment-separate the $k$-subsets of $X$. This finishes the proof of
the lower bound in \eqref{eq:kodd} and with that the proof of
Theorem~\ref{thm:ck}.

\section{A remark: Intersection-separation}\label{sec:intersectionseparation}

In the geometric setting, we discussed containment-separation. We can
extend the notion of separation of points to $k$-tuples in another way as well.
We say that a set $F$ \emph{intersection-separates} a pair of
$k$-element subsets of $X$, if $F$ intersects one of them and is disjoint from
the other. A family $\F$ of subsets of $X$ \emph{intersection-separates} the
$k$-element subsets of $X$ if, for any pair of $k$-element subsets of $X$,
there is a member of $\F$
that intersection-separates that pair. And thus, we can define the
intersection-separation numbers as

\begin{equation*}
\ik{k}{n}{d}=
\max_{
 \begin{array}{c}
{\scriptstyle X\subset\Red, |X|=n} \\
{\scriptstyle X \mbox{ \tiny in general pos.}}
 \end{array}
}
\min\left\{|\G| : \G\subset\C^d
\begin{array}{c}
 \mbox{ an intersection-separator of}\\
 k\mbox{-subsets of } X
\end{array}
\right\},
\end{equation*}
where $\C^d$ denotes the family of convex subsets of $\Red$.

The number $i_k^d(n)$ is always defined and at most $n-1$ as the singleton
subsets intersection-separate, even if we omit one of them.

The following monotonicity properties can be verified exactly as for
containment-separation.

\begin{equation}\label{eq:monotoneik}
 \ik{k}{n}{d}\leq \ik{k}{n}{d-1},
\;\mbox{ and }\;\;
 \ik{k}{n}{d}\leq \ik{k+1}{n}{d}
\end{equation}

\begin{prop}\label{prop:ik}
For any $d=2,3,\ldots$ there is a constant $c_d>0$ such that for any $n>k\ge2$
we have
\begin{eqnarray}
 \left\lfloor\frac{n+3}{6}\right\rfloor \leq& \ik{k}{n}{2} & \leq n-1
\label{eq:ik1}\\
 c_d \frac{n}{\log^{d-1} n} \leq&\ik{k}{n}{d} &\leq n-1
\label{eq:ik2}
\end{eqnarray}
\end{prop}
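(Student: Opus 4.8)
The two upper bounds are nothing new: as already observed, for any $X=\{x_1,\dots,x_n\}$ the $n-1$ singletons $\{x_1\},\dots,\{x_{n-1}\}$ intersection-separate the $k$-subsets of $X$, since any two distinct $k$-subsets $A\neq B$ satisfy $|A\triangle B|\ge 2$ and hence contain a point $x\neq x_n$ that lies in exactly one of them. So only the two lower bounds require work, and by the monotonicity in $k$ recorded in \eqref{eq:monotoneik} it suffices to prove both for $k=2$.

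For the lower bound in \eqref{eq:ik2} the plan is to reduce to ordinary separation. Fix an $n$-point set $X\subset\Red$ in general position realizing the Gerbner--T\'oth bound $\ck 1 n d\ge c\,n/\log^{d-1}n$ quoted earlier from \cite{GT13}. If $\G\subseteq\C^d$ intersection-separates the $2$-subsets of $X$, then $\G$ already separates the points of $X$ in the ordinary sense: given distinct $a,b\in X$, pick any third point $x\in X$ (possible since $n>2$) and apply $\G$ to the distinct pairs $\{a,x\}$ and $\{b,x\}$; some $F\in\G$ meets one and is disjoint from the other, say $F\cap\{a,x\}\neq\emptyset$ and $F\cap\{b,x\}=\emptyset$, and then $x,b\notin F$ forces $a\in F$, so $F$ separates $a$ from $b$. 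Hence $|\G|\ge\ck 1 n d\ge c\,n/\log^{d-1}n$, which together with the $k$-monotonicity yields \eqref{eq:ik2}; one may take $c_d$ to be the constant $c$ from \cite{GT13}. (The same argument only gives $\ik 2 n 2\ge\ck 1 n 2\ge n/(2\log n+2)$ in the plane, which is weaker than $\lfloor(n+3)/6\rfloor$, so \eqref{eq:ik1} needs a dedicated construction.)

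For the lower bound in \eqref{eq:ik1} the plan is to build, in the spirit of the lower-bound constructions for $c_2^1$, $c_2^2$ and $c_3^2$ in Theorem~\ref{thm:ck}, an $n$-point planar set in general position together with a list $P_1,\dots,P_t$ of $t=\Theta(n)$ ``hard'' pairs of $2$-subsets such that \emph{every} convex set intersection-separates at most two of them; then any intersection-separator of $2$-subsets has at least $t/2$ members, and tuning the count produces the bound $\lfloor(n+3)/6\rfloor$. Concretely one uses ``outer'' points in a perturbed centrally symmetric convex position, so that containing two opposite outer points forces a convex set to contain a fixed segment through the centre, together with ``inner'' points clustered near the centre and ordered so that a convex set through the centre meets them in an interval (equivalently: in an initial segment) of that fixed order; the pairs $P_j$ are designed so that separating $P_j$ forces the separating convex set into this ``one segment'' regime, and the interval structure then limits it to at most two of the $P_j$. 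The main obstacle is precisely this last step. Unlike in containment-separation, the clause ``$F$ is disjoint from $\{c,d\}$'' occurring in intersection-separation is a very weak constraint, so the pairs must be set up carefully to still force a separating $F$ to have a controlled trace on the point set; and since a planar convex set can cut a point set in wildly many ways (e.g.\ picking out alternating subsets of points in convex position), one must verify that the ``interval / at most two sign changes'' property is robust enough to survive the perturbation into general position. This bookkeeping, rather than the overall strategy, is where the real difficulty lies.
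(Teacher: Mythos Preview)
Your treatment of the upper bounds and of the lower bound in \eqref{eq:ik2} is correct. For \eqref{eq:ik2} you take a different route from the paper: the paper sketches a direct construction (replace each point of a K\'arolyi--Valtr configuration by a pair of ``twins'' and argue that any intersection-separator must distinguish the twins), whereas you observe that any intersection-separator of the $2$-subsets of $X$ is already an ordinary separator of $X$ via the third-point trick, and then invoke the Gerbner--T\'oth lower bound on $\ck 1 n d$. Your reduction is clean and conceptually pleasant; the twin construction is more self-contained but needs the K\'arolyi--Valtr result explicitly.

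For the lower bound in \eqref{eq:ik1}, however, you have not given a proof --- only a plan, and one that does not match what the paper does. The scheme you propose (centrally symmetric outer points plus inner points near the centre, borrowed from the $\ck 2 n 2$ and $\ck 3 n 2$ constructions) runs head-on into the obstacle you yourself flag: in intersection-separation the clause ``$F$ is disjoint from $\{c,d\}$'' does not force $F$ to contain anything, so there is no mechanism to make $F$ swallow a diameter and enter the ``interval regime.'' The paper's construction sidesteps this entirely. Take $n=3m$ and, in polar coordinates, set $p_i=(1-\varepsilon,\,2\pi i/m)$, $q_i=(1,\,2\pi i/m+\varepsilon^2)$, $r_i=(1,\,2\pi i/m-\varepsilon^2)$ for $0\le i<m$; the hard pairs are $\bigl(\{p_i,q_i\},\{p_i,r_i\}\bigr)$. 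Because the two $2$-sets share $p_i$, any convex $K$ that intersection-separates this pair must \emph{avoid} $p_i$ while containing exactly one of $q_i,r_i$. If $K$ separated three such pairs $i,j,k$, it would contain a point on the unit circle within angle $\varepsilon^2$ of each of $p_i,p_j,p_k$; choosing $j$ so that the triangle $p_ip_jp_k$ has its largest angle at $p_j$, convexity forces $p_j\in K$ for small $\varepsilon$, a contradiction. Hence each convex set separates at most two of the $m$ pairs, giving $\ik 2 n 2\ge\lceil m/2\rceil=\lfloor(n+3)/6\rfloor$. The missing idea in your sketch is precisely this ``shared point'' device, which converts the weak disjointness constraint into a genuine exclusion.
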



We have already mentioned the upper bounds. The lower bound in
\eqref{eq:ik2} can be proved by replacing each point of a K\'arolyi--Valtr
construction (see Section~\ref{sec:23dim}) by a pair of twins (two very close
points). We provide a construction to show the lower bound in \eqref{eq:ik1}.

By \eqref{eq:monotoneik} it is sufficient to consider the case $k=2$.
Suppose without loss of generality that $n=3m$.
We give the points in polar coordinates $(r, \phi)$.
Let $\varepsilon>0$ be very small.
For $0\le i\le m-1$, let $p_i$ be the point $(1-\varepsilon, 2i\pi/m)$,
$q_i$ the point $(1, 2i\pi/m+\varepsilon^2)$, and
$r_i$ the point $(1, 2i\pi/m-\varepsilon^2)$.
Let $X$ be the set of these $n=3m$ points. For any $i$, $0\le i\le m-1$,
consider the following pair of pairs:
$\left(\{p_i,q_i\},\{p_i,r_i\}\right)$.

To finish the proof, we claim that no convex set intersection-separates more
than two of these $m$ pairs of pairs.
Indeed, suppose that a convex set $K$ intersection-separates the pairs
corresponding to the indices $i,j$ and $k$. We may assume
that the greatest angle of the triangle $p_ip_jp_k$ is at $p_j$. Now, $K$ 
contains one of $q_i$ and $r_i$, one of $q_j$ and $r_j$, and one of $q_k$ and 
$r_k$. It is easy to
see that, if $\varepsilon$ is small enough, $K$ contains $p_j$, a contradiction.


\end{document}